\newcommand{\mR}{\mathbb{R}}
\newcommand{\mC}{\mathbb{C}}
\newcommand{\mH}{\mathbb{H}}
\newcommand{\mcH}{\mathcal{H}}
\newcommand{\mcS}{\mathcal{S}}
\newcommand{\mcC}{\mathcal{C}}
\newcommand{\bi}{\mbox{\boldmath{$i$}}}
\newcommand{\bj}{\mbox{\boldmath{$j$}}}
\newcommand{\bk}{\mbox{\boldmath{$k$}}}
\newcommand{\bq}{\mbox{\boldmath{$q$}}}
\newcommand{\bp}{\mbox{\boldmath{$p$}}}
\newcommand{\bx}{\mbox{\boldmath{$x$}}}
\newcommand{\by}{\mbox{\boldmath{$y$}}}
\newcommand{\bz}{\mbox{\boldmath{$z$}}}
\newcommand{\bt}{\mbox{\boldmath{$t$}}}
\newcommand{\bu}{\mbox{\boldmath{$u$}}}
\newcommand{\bv}{\mbox{\boldmath{$v$}}}
\newcommand{\bb}{\mbox{\boldmath{$b$}}}
\newcommand{\bU}{\mbox{\boldmath{$U$}}}
\newcommand{\tr}{\mbox{\rm tr}\, }
\newcommand{\rank}{\mbox{\rm rank}\, }
\newcommand{\re}{\mbox{\rm Re~}}
\newcommand{\im}{\mbox{\rm Im~}}
\newenvironment{breakablealgorithm}
  {
   \begin{center}
     \refstepcounter{algorithm}
     \hrule height.8pt depth0pt \kern2pt
     \renewcommand{\caption}[2][\relax]{
       {\raggedright\textbf{\ALG@name~\thealgorithm} ##2\par}%
       \ifx\relax##1\relax 
         \addcontentsline{loa}{algorithm}{\protect\numberline{\thealgorithm}##2}%
       \else 
         \addcontentsline{loa}{algorithm}{\protect\numberline{\thealgorithm}##1}%
       \fi
       \kern2pt\hrule\kern2pt
     }
  }{
     \kern2pt\hrule\relax
   \end{center}
  }
\begin{document}

\title{Quaternion matrix decomposition and its theoretical implications \thanks{Research supported by NSFC Grants 11771269, NSFC Grants 11831002, GIFSUFE Grants CXJJ-2019-391, and Program for Innovative Research Team of Shanghai University of Finance and Economics. }
}


\author{Chang He         \and
        Bo Jiang         \and
        Xihua Zhu
}


\institute{Chang He \at
              Research Institute for Interdisciplinary Sciences, Shanghai University of Finance and Economics, Shanghai 200433, PR China. \\
              \email{changhe@163.shufe.edu.cn}           
           \and
           Bo Jiang \at
              Research Institute for Interdisciplinary Sciences, School of Information Management and Engineering, Shanghai University of Finance and Economics, Shanghai 200433, PR China.
              \email{isyebojiang@gmail.com}
           \and
           Xihua Zhu \at
              School of Information Management and Engineering, Shanghai University of Finance and Economics, Shanghai 200433, PR China. \\
              \email{zhuxihua@163.sufe.edu.cn}
}

\date{Received: date / Accepted: date}

\maketitle

\begin{abstract}
This paper proposes a novel matrix rank-one decomposition for quaternion Hermitian matrices, which admits a stronger property than the previous results in \cite{sturm2003cones,huang2007complex,ai2011new}. The enhanced property can be used to drive some improved results in joint numerical range,  $\mathcal{S}$-Procedure and quadratically constrained quadratic programming (QCQP) in the quaternion domain, demonstrating the capability of our new decomposition technique.

\keywords{matrix rank-one decomposition \and quaternion \and joint numerical range \and $\mathcal{S}$-Procedure \and quadratic optimization}
\subclass{90C20 \and 90C30 \and 90C90 \and 65F30}
\end{abstract}

\section{Introduction}\label{intro}
In recent years, we have witnessed a burst of quaternion representations in many fields,
 including color imaging \cite{chen2015color,xu2015vector,chen2019low,miao2020low, chen2020low}, signal processing \cite{flamant2019time,flamant2018complete},  robotics \cite{chou1992quaternion}, rolling bearing fault diagnosis \cite{yi2017quaternion}, quaternion convolutional neural networks (QCNNs) \cite{zhu2018quaternion,parcollet2019quaternion}, etc. Moreover, there are some noticeable steps towards optimizing the corresponding quaternion represented problems. Specifically, Qi et al.\ \cite{qi2020quaternion, qi2021quaternion} conducted a systematic study on quaternion matrix optimization, and Flamant et al.\ \cite{flamant2021general} proposed a general framework for constrained convex quaternion optimization. In terms of algorithms in the quaternion domain, affine projection algorithms \cite{xu2015optimization} and learning algorithms \cite{jahanchahi2013class} based on gradient and Hessian have been proposed and analyzed. Hence, the increasing number of quaternion-represented applications and the studies on the associated optimization problems call for a deeper understanding of the quaternion structure that could lead to some efficient solution methods. In this paper, we shall focus on one algebraic quaternion structure: matrix rank-one decomposition, and show that such decomposition admits a stronger property than that in the real and complex domain by leveraging the intrinsic quaternion nature. We further show that such merit of rank-one decomposition can be extended to some of its theoretical implications such as $\mathcal{S}$-Procedure, joint numerical range and quadratically constrained quadratic programming (QCQP) in the quaternion domain, and improve the associated results in the real and complex domains.

The matrix rank-one decomposition that we discuss in this paper is a technique of decomposing a positive semidefinite Hermitian matrix into the sum of rank-one matrices to satisfy the so-called equal inner product property, i.e., the inner product between some given matrices and each rank-one term in the decomposition has the same value.
The first such type of decomposition was introduced by Sturm and Zhang in \cite{sturm2003cones} with the equal inner product property valid for  {\it one} matrix, and it was used as a key technique to establish the Linear Matrix Inequality presentation of a class of matrix cones with its quadratic form co-positive over the real domain. Moreover, such a decomposition technique was found to be useful in quadratic minimization \cite{ye2003new} and  designing approximation algorithms for biquadratic optimization \cite{ling2010biquadratic}.
Soon after the work of \cite{sturm2003cones}, Huang and Zhang \cite{huang2007complex} extended the
matrix rank-one decomposition to the complex domain such that the equal inner product property holds for {\it two} matrices. Interestingly, we find that such property remains valid for {\it four} matrices when the rank-one decomposition is conducted in the quaternion domain. Moreover, our proof is fully constructive and the corresponding computational procedure is summarized in
Algorithm \ref{ACQMD1}.

Theoretical implications of our novel matrix decomposition technique are quite versatile and yield stronger results than those in the real and complex domains. The first two theoretical implications are in joint numerical range and $\mathcal{S}$-Procedure, both of which have some fundamental impacts and wide applications in many fields.
In particular, the joint numerical range is an important tool in linear algebra and convex analysis, and it is found to be useful in spectrum analysis \cite{rasulov2019description} and quantum computing \cite{dirr2006new,rodman2016continuity}. $\mathcal{S}$-Procedure occupies a crucial position in the field of robust optimization \cite{anitescu2000degenerate,anitescu2002superlinearly,goldfarb2003robust}, statistics \cite{hoerl1970ridge}, signal processing \cite{luo2003applications}, among others \cite{I2007A}. With our matrix rank-one decomposition result, we manage to establish the convexity of joint numerical range \cite{au1979remark,pang2004joint} for {\it five} matrices and the {\it lossless} of $\mathcal{S}$-Procedure for {\it four} Hermitian forms. As a comparison, similar results only hold for fewer matrices in the real and complex domains. In addition, our {result} can also be applied to quadratically constrained quadratic programming (QCQP). To be specific, when the number of quadratic constraints is no larger than $4$, we show that a rank-one solution of the SDP relaxation of (QCQP), which is hence an optimal solution of (QCQP), can be recovered from our matrix rank-one decomposition technique.

This paper is organized as follows. In Section \ref{sec:1}, we introduce some notations and definitions used throughout this paper. Section \ref{sec:qua_matrix_rank_one} is devoted to the new quaternion matrix rank-one decomposition theorem. To showcase the capability of our new theorem, we illustrate some improved results in the joint numerical range in Section \ref{sec:joint_num_range} and the $\mathcal{S}$-Procedure in Section \ref{sec:s_lem}. Finally, we present how to solve the (QCQP) with our novel decomposition technique as another theoretical implication of our result in Section \ref{sec:qua_quadratic_opt}.

\section{Preliminaries}\label{sec:1}

In this section, we introduce some basic algebraic operations in the quaternion domain for scalars, vectors and matrices.

\subsection{Quaternion Operations for Scalars}\label{sec:qua_alg}

We define the set of quaternions $\mH$ as a 4-dimensional normed division algebra over the real numbers $\mR$.
It has a canonical basis $\{1, \bi, \bj, \bk\}$, where $\bi, \bj, \bk$ are imaginary units such that
\begin{equation}\label{ima_multi}
\bi^2=\bj^2=\bk^2=\bi\bj\bk=-1,\quad \bi\bj=-\bj\bi=\bk.
\end{equation}
Then, any quaternion $q \in \mH$ can be written as
$$q = q_a + q_b\bi + q_c\bj + q_d\bk,$$
where $q_a,q_b,q_c,q_d \in \mR$ are the components of $q$. The real and imaginary parts of $q$ are denoted as $\re q=q_a$ and $\im q=q_b\bi + q_c\bj + q_d\bk$ respectively.
Note that, in contrast with the product operation in the real and complex domain, the product operation in the quaternion domain is noncommutative, i.e., $q \cdot p \ne p \cdot q$ for $p, q \in \mH$.

We denote by $\overline{q}=\re q-\im q$ the quaternion conjugate of $q$, and it holds that $\overline{(p \cdot q)}=\overline{q} \cdot \overline{p}$. For a given quaternion $q \in \mH$, $|q|$ denotes its modulus and can be expressed as $$|q|=\sqrt{q \cdot \overline{q}}=\sqrt{\overline{q} \cdot q}=\sqrt{q_a^2+q_b^2+q_c^2+q_d^2}.$$
Then, we denote $q = |q|(\cos\theta + q^I\sin\theta)$ as the triangle representation \cite{fan2012qua} of quaternion $q$, where $\cos\theta =  \frac{q_a}{|q|}$, $\sin\theta = \frac{\sqrt{q_b^2 + q_c^2 + q_d^2}}{|q|}$ and $q^I = \frac{q_b\bi + q_c\bj + q_d\bk}{\sqrt{q_b^2 + q_c^2 + q_d^2}}$.
At last, any non-zero quaternion $q$ has an inverse $q^{-1}=\overline{q}/|q|^2$ and the inverse of the product of two quaternions is $(p \cdot q)^{-1}=q^{-1} \cdot p^{-1}$.

\subsection{Quaternion Vectors and Quaternion Matrices}
Similar to the scalar case, a quaternion vector $\bq \in \mH^n$ can be written as
$$\bq = \bq_a + \bq_b\bi + \bq_c\bj + \bq_d\bk,$$
where $\bq_a,\bq_b,\bq_c,\bq_d \in \mR^n$ are the components of $\bq$. For a quaternion vector $\bq$, $\bq^{\top}$ denotes the transpose of $\bq$, and $\bq^{H}=(\overline{\bq})^{\top} = \overline{(\bq^{\top})}$ denotes its conjugate transpose. Any quaternion matrix $A\in\mH^{m\times n}$ can be expressed as
$$A = A_a+A_b\bi+A_c\bj+A_d\bk$$
with $A_a,A_b,A_c,A_d \in \mR^{m\times n}$. The transpose and the conjugate transpose of $A$ are $A^{\top}$ and $A^H=(\overline{A})^{\top} = \overline{(A^{\top})}$, respectively. For two quaternion matrices $A \in \mH^{m \times n}$ and $B \in \mH^{n \times p}$,  we have $(AB)^H = B^HA^H$; however, due to the noncommutativity of the product operation, $(AB)^\top \not= B^\top A^\top$ and $\overline{(AB)} \not= \overline{A} \ \overline{B}$.
For two vectors $\bq, \bp \in \mH^n$, their inner product
$$\bq \bullet \bp = \re(\bq^H\bp) = \bq_a^\top\bp_a+\bq_b^\top\bp_b+\bq_c^\top\bp_c+\bq_d^\top\bp_d.$$
Similarly, for two matrices $A ,B \in \mH^{n \times n}$, their inner product is defined as
$$A \bullet B := \re(\tr A^HB)=\tr(A_a^\top B_a + A_b^\top B_b + A_c^\top B_c + A_d^\top B_d),$$
where `tr' denotes the trace of a matrix.

\subsection{Hermitian and Positive Semidefinite Matrices}
We call $X$ a quaternion Hermitian matrix if it satisfies $X = X^H$. The cone of quaternion Hermitian matrices is denoted as $\mcH^{n}$. Then $\bu^HX\bu$ is real for all $\bu \in \mH^n$ if and only if $X \in \mcH^{n}$.
We denote by $\mcS_{+}^{n}(\mcS_{++}^{n})$, $\mcC_{+}^{n}(\mcC_{++}^{n})$ and $\mcH_{+}^{n}(\mcH_{++}^{n})$ the cones of real symmetric positive semidefinite (positive definite), complex Hermitian positive semidefinite (positive definite) and quaternion Hermitian positive semidefinite (positive definite) matrices, respectively. The notation $X \succeq 0$ ($X \succ 0$) means that $X$ is positive semidefinite (positive definite). Then, for any matrix $X \succeq 0$ ($X \succ 0$), we have $\bu^HX\bu$ is real and nonnegative (positive) for all $\bu \in \mH^n$ ($0 \neq \bu \in \mH^n$).

\section{The Quaternion Matrix Rank-One Decomposition Method}\label{sec:qua_matrix_rank_one}
In this section, we discuss a particular rank-one decomposition of matrices over the quaternion domain such that the inner product between some given matrices and each rank-one term in the decomposition has the same value.


In the real domain, Sturm and Zhang \cite{sturm2003cones} showed that for a rank-$r$ symmetric positive semidefinite matrix $Y \in \mcS_+^{n}$ and a real symmetric matrix $B \in \mcS^{n}$, there is a rank-one decomposition of $Y$ such that:
$$
Y = \sum_{i=1}^r\by_i\by_i^\top~~\mbox{\rm and}~~\by_j^\top B\by_j= (\by_i\by_i^\top) \bullet Y = \frac{B \bullet Y}{r},~~\mbox{for}~j=1,2,\cdots,r.
$$
Subsequently, Huang and Zhang \cite{huang2007complex} proved that the above decomposition could hold for two matrices in the complex domain. In particular, suppose $Z\in\mathcal{C}_+^n$ is a complex Hermitian positive semidefinite matrix of rank $r$, and $C_1,C_2 \in \mathcal{C}^n$ are two given complex Hermitian matrices. Then, there is a rank-one decomposition of $Z$ such that:
$$
Z = \sum_{i=1}^r\bz_i\bz_i^H~~\mbox{\rm and}~~\bz_j^H C_k\bz_j = \frac{C_k \bullet Z}{r},  ~~\mbox{for}~j=1,2,\cdots,r~\mbox{\rm and}~k=1,2.
$$
Thus, a natural question arises: can a similar decomposition holds for more matrices over the quaternion domain? The following theorem is the main result of this paper and gives an affirmative answer to this question.


\begin{theorem}\label{qua_decomposition}
    Let $A_k \in \mcH^{n}$, $k = 1, 2, 3, 4$, and rank-$r$ matrix $X \succeq 0$. There exists a rank-one decomposition of $X$ such that
    \begin{equation}\label{decom_X}
        X = \sum_{i=1}^r\bx_i\bx_i^H~~\mbox{\rm and}~~\bx_i^HA_k\bx_i = \frac{A_k \bullet X}{r},~~\mbox{for}~~i = 1,\cdots,r
    \end{equation}
  and $k = 1, 2, 3, 4$.
\end{theorem}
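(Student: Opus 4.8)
My plan rests on two ingredients: a ``$2\times2$ engine'' in which the quaternionic nature is used in an essential way — through the identity $\dim_{\mR}\mH=4$ — and an extremal argument that lifts the engine from rank two to arbitrary rank. I expect the engine to be the conceptual heart of the proof; the lifting is then short.

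\emph{The $2\times2$ engine.} The claim here is: for any $M_1,\dots,M_4\in\mcH^2$ there is a unit vector $\bu\in\mH^2$ with $\bu^HM_k\bu=\tfrac12\tr M_k$ for $k=1,\dots,4$. Since $(\bu\bu^H)\bullet I=\|\bu\|^2=1$, this is equivalent to $(\bu\bu^H-\tfrac12 I)\bullet\widetilde M_k=0$, where $\widetilde M_k:=M_k-\tfrac12(\tr M_k)I$ is traceless Hermitian. The real vector space $\mathcal T$ of traceless matrices in $\mcH^2$ is $5$-dimensional, with general element $\bigl(\begin{smallmatrix}a&\overline w\\ w&-a\end{smallmatrix}\bigr)$, $a\in\mR$, $w\in\mH$; and for a unit $\bu=(u_1,u_2)$ one computes $\bu\bu^H-\tfrac12 I=\bigl(\begin{smallmatrix}a&\overline w\\ w&-a\end{smallmatrix}\bigr)$ with $a=\tfrac12(|u_1|^2-|u_2|^2)$, $w=u_2\overline{u_1}$, so that $a^2+|w|^2=\tfrac14$, and conversely every such $(a,w)$ is attained (boundary cases $u_1u_2=0$ included). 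Thus $\{\bu\bu^H-\tfrac12 I:\|\bu\|=1\}$ is a sphere about the origin of $\mathcal T\cong\mR^5$. Since $\{\widetilde M_1,\dots,\widetilde M_4\}^\perp$ has dimension $\ge 5-4=1$ and a centered sphere meets every line through the origin, the desired $\bu$ exists. This step is exactly where the quaternions matter: over $\mR$ (resp.\ $\mC$) the corresponding traceless space is $1$- (resp.\ $3$-) dimensional, which is why \cite{sturm2003cones} (resp.\ \cite{huang2007complex}) reaches only one (resp.\ two) matrices.

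\emph{Lifting to rank $r$.} Let $\mathcal D$ be the set of ordered rank-one decompositions $X=\sum_{i=1}^r\bp_i\bp_i^H$ of the given rank-$r$ matrix. It is nonempty (if $X=\sum_{i=1}^r\lambda_i\bv_i\bv_i^H$ is an eigendecomposition, take $\bp_i=\sqrt{\lambda_i}\,\bv_i$) and compact: $\bp_i\bp_i^H\preceq X$ forces $\|\bp_i\|^2=\re\tr(\bp_i\bp_i^H)\le\re\tr X$, and $\mathcal D$ is closed. Put $\bar v:=\bigl(\tfrac1r A_k\bullet X\bigr)_{k=1}^4\in\mR^4$, and $v_i:=(\bp_i^HA_k\bp_i)_{k=1}^4$ for a decomposition; note $\tfrac1r\sum_iv_i=\bar v$. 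Choose a decomposition minimizing $\Phi:=\sum_{i=1}^r\|v_i-\bar v\|^2$; I claim $\Phi=0$ there, which is precisely \eqref{decom_X}. If $\Phi>0$, the $v_i$ are not all equal, so fix $i\ne j$ with $v_i\ne v_j$; apply the engine to $[\bp_i,\bp_j]^HA_k[\bp_i,\bp_j]\in\mcH^2$, obtaining a unit $\widehat\bu\in\mH^2$, extend to a unitary $\widehat U\in U(2,\mH)$, and replace $(\bp_i,\bp_j)$ by $(\bp_i,\bp_j)\widehat U$. This preserves $\bp_i\bp_i^H+\bp_j\bp_j^H$ (hence membership in $\mathcal D$), fixes all $v_\ell$ with $\ell\ne i,j$, and sends both $v_i,v_j$ to $\tfrac12(v_i+v_j)$ (the former by the engine, the latter since the pair-sum $v_i+v_j$ is unchanged); so $\Phi$ changes by $2\|\tfrac12(v_i+v_j)-\bar v\|^2-\|v_i-\bar v\|^2-\|v_j-\bar v\|^2=-\tfrac12\|v_i-v_j\|^2<0$, contradicting minimality. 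Hence $\Phi=0$, i.e.\ $\bp_i^HA_k\bp_i=\tfrac1r A_k\bullet X$ for all $i,k$, as required.

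\emph{On the difficulty, and a constructive variant.} The bulk of the work — and the place I expect to be the main obstacle — is the $2\times2$ engine and the (elementary but not automatic) verification that $\bu\mapsto\bu\bu^H-\tfrac12 I$ surjects onto the sphere in $\mathcal T$. Note that the more obvious route — reducing to a single good term $\bx_1$ in the range of $X$ and recursing on $X-\bx_1\bx_1^H$ — would require knowing that the joint numerical range of four quaternion Hermitian forms contains the centroid of its ``diagonal'' points, essentially the convexity statement of Section~\ref{sec:joint_num_range}, which is not available here; the extremal argument above sidesteps this. Finally, the proof as written is non-constructive; a finite, fully constructive procedure (Algorithm~\ref{ACQMD1}) should be obtained by realizing the lifting as an induction on $r$ together with an explicit terminating routine for producing one good term and the residual $X-\bx_1\bx_1^H$ (whose target $\tfrac1{r-1}A_k\bullet(X-\bx_1\bx_1^H)$ equals $\tfrac1r A_k\bullet X$).
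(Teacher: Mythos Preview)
Your proof is correct and takes a genuinely different route from the paper's. The paper proceeds constructively and matrix-by-matrix: starting from a decomposition that is already balanced for $A_1,A_2$ (via \cite{huang2007complex}), it picks two terms whose $A_3$-values straddle the target, parametrizes a rotation of that pair by a quaternion $\omega$, imposes the linear conditions $\re(\overline\omega\,\bu_1^HA_k\bu_2)=0$ for $k=1,2$ (two real equations in the four real components of $\omega$) so that the $A_1,A_2$-balance is preserved, and then solves a real quadratic in $|\omega|$ to hit the $A_3$-target exactly for one of the two new vectors; it peels that vector off and recurses, then repeats the whole scheme for $A_4$ with three linear constraints in four unknowns. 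Your argument replaces this with a single clean $2\times2$ lemma---the map $\bu\mapsto\bu\bu^H-\tfrac12 I$ surjects onto the sphere of the $5$-dimensional traceless space, so four linear constraints always leave a point on that sphere---and then lifts to rank $r$ by a compactness/variance-minimization step rather than by recursion. What the paper's approach buys is a fully explicit, finitely terminating procedure (their Algorithm~\ref{ACQMD1}); what yours buys is conceptual transparency about why ``four'' is the right number ($\dim_{\mR}\mcH^2_{\mathrm{traceless}}=5=4+1$) and a proof that treats all four matrices symmetrically in one stroke. One small remark: your final paragraph's sketch of a constructive variant is not the same as your extremal lifting (iterating your engine only averages pairs, which need not terminate finitely), so the route to Algorithm~\ref{ACQMD1} really would have to follow the paper's peel-off-one-good-term recursion rather than your minimization.
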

\begin{proof}
	We start with a weaker version of \eqref{decom_X} such that it holds only for two matrices $A_1$ and $A_2$, i.e., there exists a rank-one decomposition of $X$ such that:
	   \begin{equation}\label{decom_X_12}X = \sum_{i=1}^r\bu_i\bu_i^H ~~\mbox{\rm and}~~ \bu_i^HA_k\bu_i = \frac{A_k \bullet X}{r}, ~\mbox{\rm for}~ i = 1,\cdots,r~\mbox{and}~ k = 1, 2
	   \end{equation}
	according to the similar argument of Theorem 2.1 in \cite{huang2007complex}. The rest of the proof consists of two steps.\\
	{\bf Step $1$: prove \eqref{decom_X} for $k=1,2,3$}
	
	The conclusion follows
    if the decomposition in \eqref{decom_X_12} is also valid for matrix $A_3$, i.e., $\bu_i^HA_3\bu_i = \frac{A_3 \bullet X}{r}$, $i = 1,\cdots,r$. Otherwise, without loss of generality, there exist two vectors $\bu_1$ and $\bu_2$ such that
    $$\bu_1^HA_3\bu_1 > \frac{A_3 \bullet X}{r} \ \mbox{\rm and} \ \bu_2^HA_3\bu_2 < \frac{A_3 \bullet X}{r}.$$
    Denote $\bu_1^HA_1\bu_2 = a_1 + b_1\bi + c_1\bj + d_1\bk$ and $\bu_1^HA_2\bu_2 = a_2 + b_2\bi + c_2\bj + d_2\bk$. Let $\omega = \omega_a + \omega_b\bi + \omega_c\bj + \omega_d\bk \in \mH$ and construct
    $$\bv_1 = \frac{\bu_1\omega + \bu_2}{\sqrt{1 + |\omega|^2}} \ , \ \bv_2 = \frac{-\bu_1 + \bu_2\overline{\omega}}{\sqrt{1 + |\omega|^2}}.$$
    Then, it is easy to verify that
    \begin{align}\label{v-equal-u}
        &\bv_1\bv_1^H + \bv_2\bv_2^H \notag \\
        = &\frac{1}{1 + |\omega|^2}\left((\bu_1\omega + \bu_2)(\bu_1\omega + \bu_2)^H\right) + \frac{1}{1 + |\omega|^2}\left((-\bu_1 + \bu_2\overline{\omega})(-\bu_1 + \bu_2\overline{\omega})^H\right) \notag \\
        = &\frac{1}{1 + |\omega|^2}\big{(}(\bu_1\omega\overline{\omega}\bu_1^H + \bu_1\omega \bu_2^H + \bu_2\overline{\omega}\bu_1^H + \bu_2\bu_2^H) \notag \\
          &\qquad\qquad + (\bu_1\bu_1^H - \bu_1\omega \bu_2^H - \bu_2\overline{\omega}\bu_1^H + \bu_2\overline{\omega}\omega \bu_2^H)\big{)} \notag \\
        = &\frac{|\omega|^2\bu_1\bu_1^H + \bu_2\bu_2^H + \bu_1\bu_1^H + |\omega|^2\bu_2\bu_2^H}{1 + |\omega|^2} \notag \\
        = &\bu_1\bu_1^H + \bu_2\bu_2^H.
    \end{align}
    Note that the identity above holds for any $\omega$, which will allow us some flexibility to find an appropriate $\omega$ such that
    \begin{equation}\label{vAv_2}
  (1 + |\omega|^2)\bv_1^HA_k\bv_1=(1 + |\omega|^2)\frac{A_k \bullet X}{r},~~\mbox{for}~~k=1,2.
    \end{equation}
 In particular, observe that
    \begin{align}\label{vAv_i}
        &(1 + |\omega|^2)\bv_1^HA_k\bv_1 \\ \notag
        = &(\overline{\omega}\bu_1^H + \bu_2^H)A_k(\bu_1\omega + \bu_2) \\ \notag
        = &\overline{\omega}\bu_1^HA_k\bu_1\omega + \bu_2^HA_k\bu_2 + \overline{\omega}\bu_1^HA_k\bu_2 + \bu_2^HA_k\bu_1\omega \\ \notag
        = &(1 + |\omega|^2)\frac{A_k \bullet X}{r} + 2\re(\overline{\omega}\bu_1^HA_k\bu_2) \\ \notag
        = &(1 + |\omega|^2)\frac{A_k \bullet X}{r}, ~\mbox{for }~k=1,2 \notag
    \end{align}
    as long as
    \begin{align}
        &\re(\overline{\omega}\bu_1^HA_k\bu_2) \\ \notag
        = &\re\left((\omega_a - \omega_b\bi - \omega_c\bj - \omega_d\bk)(a_k + b_k\bi + c_k\bj + d_k\bk)\right) \\ \notag
        = &a_k\omega_a + b_k\omega_b + c_k\omega_c + d_k\omega_d=0. \notag
    \end{align}
Therefore, any solution of the following {equations}
    \begin{equation}\label{qua_sys_abcd}
\begin{cases}
a_1\omega_a + b_1\omega_b + c_1\omega_c + d_1\omega_d = 0 \\
a_2\omega_a + b_2\omega_b + c_2\omega_c + d_2\omega_d = 0,
\end{cases}
\end{equation}
will make \eqref{vAv_2} valid. Moreover, for
a particular solution $(\omega_{a*}, \omega_{b*}, \omega_{c*}, \omega_{d*})$ of \eqref{qua_sys_abcd} with $\omega_{a*}^2 + \omega_{b*}^2 + \omega_{c*}^2+ \omega_{d*}^2 =1$, $\omega(\alpha) = \alpha(\omega_{a*}+ \omega_{b*}\bi + \omega_{c*}\bj + \omega_{d*}\bk )$ is also a solution of \eqref{qua_sys_abcd} with $ |\omega(\alpha)|=\alpha$ for any $\alpha > 0$, as \eqref{qua_sys_abcd} is homogeneous. To extend the validness of \eqref{vAv_2} to matrix $A_3$ for some $\alpha$, i.e.,
 \begin{equation}\label{vAv_3_opt}
(1 + \alpha^2)\bv_1^HA_3\bv_1= (1 + |\omega(\alpha)|^2)\bv_1^HA_3\bv_1=(1 + \alpha^2)\frac{A_3 \bullet X}{r},
\end{equation}
we compute
\begin{align}\label{vAv_3}
&(1 + |\omega(\alpha)|^2)\bv_1^HA_3\bv_1 \\ \notag
= &(\overline{\omega(\alpha)}\bu_1^H + \bu_2^H)A_3(\bu_1\omega(\alpha) + \bu_2) \\ \notag
= &\overline{\omega(\alpha)}\bu_1^HA_3\bu_1\omega(\alpha) + \bu_2^HA_3\bu_2 + 2\re(\overline{\omega(\alpha)}\bu_1^HA_3\bu_2) \\ \notag
= &|\omega(\alpha)|^2\bu_1^HA_3\bu_1 + \bu_2^HA_3\bu_2 + 2\re(\overline{\omega(\alpha)}\bu_1^HA_3\bu_2)\\
= &\alpha^2\bu_1^HA_3\bu_1 + \bu_2^HA_3\bu_2 + 2\re(\overline{\omega(\alpha)}\bu_1^HA_3\bu_2). \notag
\end{align}
To proceed, we leverage the triangular representations we provided above to
represent $\re(\overline{\omega(\alpha)}\bu_1^HA_3\bu_2)$.
In particular, denoting $\tau_3:=\bu_1^HA_3\bu_2 = a_3 + b_3\bi + c_3\bj + d_3\bk$, we have $\omega(\alpha) = |\omega(\alpha)|(\cos\theta_\alpha + {\omega^I(\alpha)}\sin\theta_\alpha)$ and $\bu_1^HA_3\bu_2 = |\tau_3|(\cos\theta_3 + \tau_3^I\sin\theta_3)$.
 Then
    \begin{align}\label{triangu-v3}
        &\re(\overline{\omega(\alpha)}\bu_1^HA_3\bu_2) \\ \notag
        = &|\omega(\alpha)||\tau_3|\re[(\cos\theta_\alpha - {\omega^I(\alpha)}\sin\theta_\alpha)(\cos\theta_3 + {\tau^I_3}\sin\theta_3)] \\ \notag
        = &\alpha|\tau_3|[\cos\theta_\alpha\cos\theta_3 - \sin\theta_\alpha\sin\theta_3 \re({\omega^I(\alpha)} \cdot {\tau^I_3})], \notag
    \end{align}
    where $\cos\theta_\alpha = \frac{\alpha \omega_{a*}}{\alpha \sqrt{\omega_{a*}^2 + \omega_{b*}^2 + \omega_{c*}^2 + \omega_{d*}^2}} = \omega_{a*}$, $\sin\theta_\alpha = \sqrt{ \omega_{b*}^2 + \omega_{c*}^2 + \omega_{d*}^2}$ and ${\omega^I(\alpha)} = \frac{\omega_{b*}\bi + \omega_{c*}\bj + \omega_{d*}\bk}{\sqrt{\omega_{b*}^2 + \omega_{c*}^2 + \omega_{d*}^2}}$ are all independent of $\alpha$.
    Combing \eqref{vAv_3} and \eqref{triangu-v3}, identity \eqref{vAv_3_opt} that we want to prove is equivalent to
    \begin{equation}\label{root_function}
        \left(\bu_1^HA_3\bu_1 - \frac{A_3 \bullet X}{r}\right)\alpha^2 + 2\iota \cdot \alpha  + \left(\bu_2^HA_3\bu_2 - \frac{A_3 \bullet X}{r}\right) = 0,
    \end{equation}
    where $\iota = |\tau_3|\left(\cos\theta_\alpha\cos\theta_3 - \sin\theta_\alpha\sin\theta_3 \re({\omega^I(\alpha)} \cdot {\tau^I_3})\right)$. Then \eqref{root_function} is just a quadratic equation in $\alpha$, and it must have two real roots with opposite signs since $\bu_1^HA_3\bu_1 > \frac{A_3 \bullet X}{r}$ and $\bu_2^HA_3\bu_2 < \frac{A_3 \bullet X}{r}$. Let $\alpha_*$ be the positive root of \eqref{root_function} and construct $\omega_* = \alpha_*(\omega_{a*} + \omega_{b*}\bi + \omega_{c*}\bj + \omega_{d*}\bk)$. Then \eqref{vAv_3_opt} holds with $\alpha = \alpha_*$, and thus $\bv_1^HA_k\bv_1 = \frac{A_k \bullet X}{r}$ for $k = 1, 2, 3$. Since $X = \sum_{i=1}^r\bx_i\bx_i^H$, we have
    $$ X - \bv_1\bv_1^H \overset{\eqref{v-equal-u}}= \sum_{j = 3}^r\bu_j\bu_j^H + \bv_2\bv_2^H \succeq 0. $$
    Therefore, rank$(X - \bv_1\bv_1^H) = r -1$, $ \bu_i^HA_k\bu_i \overset{\eqref{decom_X_12}} = \frac{A_k \bullet X}{r}$, $i = 3,\cdots,r$ for $k = 1, 2$, and \begin{eqnarray*}\bv_2^HA_k\bv_2 &=&A_k \bullet (\bv_2 \bv_2^H) = A_k \bullet \left(X - \sum_{j = 3}^r\bu_j\bu_j^H - \bv_1\bv_1^H\right) \\
    	& \overset{\eqref{vAv_2}\,\eqref{decom_X_12}}=&A_k \bullet X - (r-2) \frac{A_k \bullet X }{r} - \frac{A_k \bullet X }{r}  = \frac{A_k \bullet X }{r},~~\forall ~~k=1,2.
    	\end{eqnarray*}
    Then, we can recursively repeat the above process on $X - \bv_1\bv_1^H$ and finally obtain a rank-one decomposition of $X$ such that
    \begin{equation}\label{decom_X_123}
        X = \sum_{i=1}^r\bv_i\bv_i^H~~\mbox{\rm and}~~\bv_i^HA_k\bv_i = \frac{A_k \bullet X}{r},~\mbox{for}~i = 1,\cdots,r~\mbox{and}~k=1,2,3.
    \end{equation}
    {\bf Step $2$: prove \eqref{decom_X} for $k=1,2,3,4$}

The conclusion follows
if the decomposition in \eqref{decom_X_123} is also valid for matrix $A_4$, i.e., $\bv_i^HA_4\bv_i = \frac{A_4 \bullet X}{r}$, $i = 1,\cdots,r$. Otherwise, without loss of generality, there exist two vectors $\bv_1$ and $\bv_2$ such that
    $$\bv_1^HA_4\bv_1 > \frac{A_4 \bullet X}{r} \ \mbox{\rm and} \ \bv_2^HA_4\bv_2 < \frac{A_4 \bullet X}{r}.$$
    Denote $\bv_1^HA_1\bv_2 = \hat{a}_1 + \hat{b}_1\bi + \hat{c}_1\bj + \hat{d}_1\bk$, $\bv_1^HA_2\bv_2 = \hat{a}_2 + \hat{b}_2\bi + \hat{c}_2\bj + \hat{d}_2\bk$ and $\bv_1^HA_3\bv_2 = \hat{a}_3 + \hat{b}_3\bi + \hat{c}_3\bj + \hat{d}_3\bk$. Let $\hat{\omega} = \hat{\omega}_a + \hat{\omega}_b\bi + \hat{\omega}_c\bj + \hat{\omega}_d\bk \in \mH$ and construct
    $$\bx_1 = \frac{\bv_1\hat{\omega} + \bv_2}{\sqrt{1 + |\hat{\omega}|^2}} \ , \ \bx_2 = \frac{-\bv_1 + \bv_2\overline{\hat{\omega}}}{\sqrt{1 + |\hat{\omega}|^2}}.$$
    Then, similar to \eqref{v-equal-u}, we can verify that
    \begin{equation}\label{x-equal-v}
    \bx_1\bx_1^H + \bx_2\bx_2^H=\bv_1\bv_1^H + \bv_2\bv_2^H
    \end{equation}
    holds for any $\hat{\omega}$. Moreover, when
    $\re(\overline{\hat{\omega}}\bv_1^HA_k\bv_2) = 0$, or equivalently $\hat{a}_k\hat{\omega}_a + \hat{b}_k\hat{\omega}_b + \hat{c}_k\hat{\omega}_c + \hat{d}_k\hat{\omega}_d = 0$, it holds that
      \begin{equation}\label{vAv_k}
  (1 + |\hat{\omega}|^2)\bx_1^HA_k\bx_1=(1 + |\hat{\omega}|^2)\frac{A_k \bullet X}{r},~~\mbox{for}~~k=1,2,3.
  \end{equation}
    In other words, any solution of the following {equations}
    \begin{equation}\label{qua_sys_abcd3}
        \begin{cases}
            \hat{a}_1\hat{\omega}_a + \hat{b}_1\hat{\omega}_b + \hat{c}_1\hat{\omega}_c + \hat{d}_1\hat{\omega}_d = 0 \\
            \hat{a}_2\hat{\omega}_a + \hat{b}_2\hat{\omega}_b + \hat{c}_2\hat{\omega}_c + \hat{d}_2\hat{\omega}_d = 0 \\
            \hat{a}_3\hat{\omega}_a + \hat{b}_3\hat{\omega}_b + \hat{c}_3\hat{\omega}_c + \hat{d}_3\hat{\omega}_d = 0,
        \end{cases}
    \end{equation}
    will make \eqref{vAv_k} valid. Since \eqref{vAv_k} is homogeneous and has three identities and four variables, we can find
    a particular solution $(\hat{\omega}_{a*}, \hat{\omega}_{b*}, \hat{\omega}_{c*}, \hat{\omega}_{d*})$ of \eqref{qua_sys_abcd3} with $\hat{\omega}_{a*}^2 + \hat{\omega}_{b*}^2 + \hat{\omega}_{c*}^2+ \hat{\omega}_{d*}^2 =1$ such that $\hat{\omega}(\hat{\alpha}) = \hat{\alpha}(\hat{\omega}_{a*}+ \hat{\omega}_{b*}\bi + \hat{\omega}_{c*}\bj + \hat{\omega}_{d*}\bk )$ is also a solution of \eqref{qua_sys_abcd3} with $ |\hat{\omega}(\hat{\alpha})|=\hat{\alpha}$ for any $\hat{\alpha} > 0$.
    To extend the validness of \eqref{vAv_k} to matrix $A_4$ for some $\hat{\alpha}$, i.e.,
    \begin{equation}\label{vAv_4_opt}
    (1 + \hat{\alpha}^2)\bx_1^HA_4\bx_1= (1 + |\hat{\omega}(\hat{\alpha})|^2)\bx_1^HA_4\bx_1=(1 + \hat{\alpha}^2)\frac{A_4 \bullet X}{r},
    \end{equation}
    we compute
    \begin{align}\label{vAv_4}
    &(1 + |\hat{\omega}(\hat{\alpha})|^2)\bx_1^HA_4\bx_1 \\ \notag
        = &(\overline{\hat{\omega}(\hat{\alpha})}\bv_1^H + \bv_2^H)A_4(\bv_1\hat{\omega}(\hat{\alpha}) + \bv_2) \\ \notag
        = &|\hat{\omega}(\hat{\alpha})|^2\bv_1^HA_4\bv_1 + \bv_2^HA_4\bv_2 + 2\re(\overline{\hat{\omega}(\hat{\alpha})}\bv_1^HA_4\bv_2)\\ \notag
        = &\hat{\alpha}^2\bv_1^HA_4\bv_1 + \bv_2^HA_4\bv_2 + 2\re(\overline{\hat{\omega}(\hat{\alpha})}\bv_1^HA_4\bv_2).
    \end{align}
    Denoting $\tau_4:=\bv_1^HA_4\bv_2 = \hat{a}_4 + \hat{b}_4\bi + \hat{c}_4\bj + \hat{d}_4\bk$, triangular representations of $\tau_4$ and $\hat{\omega}(\hat{\alpha}) $ gives that
    $$
    \hat{\alpha}\hat{\iota}:= \hat{\alpha}|\tau_4|\left(\cos\hat{\theta}_{\hat{\alpha}}\cos\hat{\theta}_4 - \sin\hat{\theta}_{\hat{\alpha}}\sin\hat{\theta}_4 \re({\hat{\omega}^I(\hat{\alpha})} \cdot \tau_4^I)\right)=\re(\overline{\hat{\omega}(\hat{\alpha})}\bv_1^HA_4\bv_2),
    $$
   where  $\hat{\omega}(\hat{\alpha}) = |\hat{\omega}(\hat{\alpha})|(\cos\hat{\theta}_{\hat{\alpha}} + {\hat{\omega}^I(\hat{\alpha})}\sin\hat{\theta}_{\hat{\alpha}})$ and $\bv_1^HA_4\bv_2 = |\tau_4|(\cos\hat{\theta}_4 + \tau_4^I\sin\hat{\theta}_4)$.
    Combining the above equality with \eqref{vAv_4}, identity \eqref{vAv_4_opt} that we want to prove is equivalent to the following real quadratic equation in terms of $\hat{\alpha}$:
    \begin{equation}\label{root_function2}
        \left(\bv_1^HA_4\bv_1 - \frac{A_4 \bullet X}{r}\right)\hat{\alpha}^2 + 2\hat{\iota}\hat{\alpha} + \left(\bv_2^HA_4\bv_2 - \frac{A_4 \bullet X}{r}\right) = 0,
    \end{equation}
    where $\hat{\iota}$ is independent of $\hat \alpha$ as $\sin\hat{\theta}_{\hat{\alpha}}$, $\cos\hat{\theta}_{\hat{\alpha}}$ and $\hat{\omega}^I(\hat{\alpha})$ are all independent of $\hat \alpha$.
    Note that \eqref{root_function2} must have two real roots with opposite signs since $\bv_1^HA_4\bv_1 > \frac{A_4 \bullet X}{r}$ and $\bv_2^HA_4\bv_2 < \frac{A_4 \bullet X}{r}$. Let $\hat{\alpha}_*$ be the positive root of \eqref{root_function2} and construct $\hat{\omega}_* = \hat{\alpha}_*(\hat{\omega}_{a*} + \hat{\omega}_{b*}\bi + \hat{\omega}_{c*}\bj + \hat{\omega}_{d*}\bk)$. Then \eqref{vAv_4_opt} holds with $\hat{\alpha} = \hat{\alpha}_*$, and thus $\bv_1^HA_k\bv_1 = \frac{A_k \bullet X}{r}$ for $k = 1, 2, 3, 4$.
    Recall that $X = \sum_{i=1}^r\bx_i\bx_i^H$, we have
    $$ X - \bx_1\bx_1^H \overset{\eqref{x-equal-v}}= \sum_{j = 3}^r\bv_j\bv_j^H + \bx_2\bx_2^H \succeq 0. $$
    Therefore, rank$(X - \bx_1\bx_1^H) = r -1$, $ \bv_i^HA_k\bv_i \overset{\eqref{decom_X_123}} = \frac{A_k \bullet X}{r}$, $i = 3,\cdots,r$ for $k = 1, 2, 3$, and \begin{eqnarray*}\bx_2^HA_k\bx_2 &=&A_k \bullet (\bx_2 \bx_2^H) = A_k \bullet \left(X - \sum_{j = 3}^r\bv_j\bv_j^H - \bx_1\bx_1^H\right) \\
    	& \overset{\eqref{vAv_k}\,\eqref{decom_X_123}}=&A_k \bullet X - \frac{A_k \bullet X }{r} - (r-2) \frac{A_k \bullet X }{r} = \frac{A_k \bullet X }{r},~~\forall ~~k=1,2,3.
    	\end{eqnarray*}
    Repeating the above process recursively on $X - \bx_1\bx_1^H$, we will finally obtain a rank-one decomposition of $X$ such that \eqref{decom_X} holds for $k=1,2,3,4$.
\end{proof}

Note that the rank-one decomposition procedure we provide in the proof of Theorem \ref{qua_decomposition} is actually implementable. To make it clear, we summarize all the computational procedures in Algorithm \ref{ACQMD1} below.
In particular, Step $1$ and Step $2$ in the proof are described by Algorithm \ref{ACQMD1} with $\ell =3$ and $\ell =4$, respectively.

\begin{breakablealgorithm}
    \caption{Algorithm for computing the quaternion rank-one decomposition of $\ell$ matrices (ACQRD)}
    \label{ACQMD1}
    \begin{algorithmic}
        \STATE{$\boldsymbol{Input:}$ Hermitian matrices $A_k\in\mcH^n$ for $k=1,\cdots,\ell$, a rank-$r$ Hermitian \\
               \qquad positive semidefinite matrix $X\in\mcH_+^n$ with $X = \sum_{i=1}^r\bu_i\bu_i^H$ such that \\
               \qquad $\bu_i^HA_k\bu_i = \frac{A_k \bullet X}{r}, ~\mbox{\rm for}~ i = 1,\cdots,r~\mbox{\rm and}~ k = 1, \cdots, \ell-1.$
                }
        \STATE{$\boldsymbol{Output:}$ $X = \sum_{i=1}^r\bx_i\bx_i^H$, such that
         $
               \bx_i^HA_k\bx_i = \frac{A_k \bullet X}{r}, \mbox{\rm for } i = 1,\cdots,r$ \\
              \qquad and $k = 1, \cdots, \ell.$}
        \STATE{\qquad Let $\bU=\{\bu_1, \cdots, \bu_r  \}$ and $i=1$. \\
        \qquad\textbf{for $j=1,\cdots, r$}}
        \STATE{\qquad \qquad \textbf{If} $\bu_j^HA_{\ell}\bu_j= \frac{A_{\ell} \bullet X}{r}$, then $\bx_i = \bu_j$, $i=i+1$, and $\bU = \bU/\bu_j $.}
        \STATE{\qquad \textbf{end} \\
        	
        \qquad\textbf{repeat}}
        \STATE{\qquad\quad 1. Find $\hat \bu_1$ and $\hat \bu_2 \in \bU$ such that $\hat \bu_1^HA_{\ell}\hat \bu_1 - \frac{A_{\ell} \bullet X}{r} > 0$ and $\hat \bu_2^HA_{\ell}\hat \bu_2 -$ \\
        \qquad\quad $\frac{A_{\ell} \bullet X}{r} < 0$. Let $\hat \bu_1^HA_k \hat \bu_{2} = a_k + b_k\bi + c_k\bj + d_k\bk$ for $k=1,\cdots,\ell-1$ \\
        \qquad\quad and compute one solution $(\omega_{a*},\omega_{b*},\omega_{c*},\omega_{d*})$ with $\omega_{a*}^2 + \omega_{b*}^2 + \omega_{c*}^2+$\\
        \qquad\quad $ \omega_{d*}^2 =1$ of the following the {equations}
        	 \begin{equation*}
        	a_k\omega_a + b_k\omega_b + c_k\omega_c + d_k\omega_d = 0~~\mbox{for}~~k=1,\cdots, \ell-1.
        	\end{equation*}
       }	
        \STATE{\qquad\quad 2. Compute the positive root $\alpha^*$ of the real quadratic equation:
        \begin{equation*}
        \small\qquad\quad\left(\hat \bu_1^HA_{\ell}\hat \bu_1 - \frac{A_{\ell} \bullet X}{r}\right)\alpha^2 + 2\iota\cdot\alpha + \left(\hat \bu_{2}^HA_{\ell}\hat\bu_{2} - \frac{A_{\ell} \bullet X}{r}\right) = 0,
        \end{equation*}
        \qquad\quad where $\small\iota = |\tau_{\ell}|\left(\cos\theta_\alpha\cos\theta_{\ell} - \sin\theta_\alpha\sin\theta_{\ell}\mbox{\rm Re}({\omega^I(\alpha)} \cdot {\tau^I_{\ell}})\right)$, $\small (\cos\theta_\alpha, \sin\theta_\alpha,$\\
        \qquad\quad ${\omega^I(\alpha)} )$ and $(\cos\theta_{\ell}, \sin\theta_{\ell}, \tau^I_{\ell})$ are the triangular-representation-triple \\
        \qquad\quad of $\omega(\alpha)=\alpha(\omega_{a*}+\omega_{b*}\bi+\omega_{c*}\bj+\omega_{d*}\bk)$ and $\tau_{\ell}=\hat \bu_1^HA_{\ell}\hat\bu_{2}$, respectively.}
        \STATE{\qquad\quad 3. Set $\omega_* = \alpha_* (\omega_{a*}+\omega_{b*}\bi+\omega_{c*}\bj+\omega_{d*}\bk)$,
        $$\hat \bv_1 = \frac{\hat \bu_1\omega_* + \bu_{2}}{\sqrt{1 + |\omega_*|^2}} \ \mbox{and} \ \hat \bv_{2} = \frac{-\hat\bu_1 + \hat\bu_{2}\overline{\omega_*}}{\sqrt{1 + |\omega_*|^2}}.$$}
        \STATE{\qquad\quad 4. Let $\bx_i := \hat \bv_1$, $i=i+1$, $\bU = \bU / \hat \bu_1$. \textbf{If} $\hat \bv_2^{H} A_{\ell} \hat \bv_2  = \frac{A_{\ell}\bullet X}{r}$, then \\
        \qquad\quad $\bx_i := \hat \bv_2$, $i=i+1$, $\bU = \bU / \hat \bu_2$. \textbf{Else} $\bU = (\bU/\hat \bu_2) \bigcup \hat \bv_2$.}
        \STATE{\qquad \textbf{until} $i=r+1$.}
    \end{algorithmic}
\end{breakablealgorithm}

\section{The Joint Numerical Range}\label{sec:joint_num_range}
Numerical range is an important tool in linear algebra and convex analysis, and it has wide  applications in spectrum analysis \cite{rasulov2019description}, quantum computing \cite{dirr2006new,rodman2016continuity}, engineering, etc. Joint numerical range was first proposed in 1979 and is an extension of numerical range \cite{au1979remark}, which mainly
focuses on the geometric properties like convexity of the joint field values of several matrices. Joint numerical range also has as wide applications as that of numerical range, and theoretically, it has a close connection to other fundamental results such as $\mathcal{S}$-Lemma, which will be discussed in the next section. On the other hand, people generalized the results of joint numerical ranges in various ways \cite{szymanski2018classification,li2009joint,brickman1961field,ai2011new}. In this section, we show how to further extend the classical results on the convexity of the joint numerical ranges \cite{au1979remark,pang2004joint} to the quaternion domain via the rank-one decomposition of quaternion matrices studied in the last section. We start with some basic concepts that will be used later.
\begin{definition}
    Let $A$ be any $n \times n$ quaternion matrix, the $field \ of \ values$ of A is given by
    \begin{equation*}
        \mathcal{F}(A) := \left\{ \bx^HA\bx \ : \ {\bx^H\bx = 1}, \bx \in \mH^n \right\} \subseteq \mathbb{H}.
    \end{equation*}
\end{definition}
With this concept in hand, we formally define the joint numerical range of a set of matrices as follows.
\begin{definition}
    The $joint \ numerical \ range$ of $n \times n$ quaternion matrices $A_1, \cdots, A_m$ is defined to be
    \begin{equation*}
        \mathcal{F}(A_1, \cdots, A_m) := \left\{ \begin{pmatrix} \bx^HA_1\bx \\ \bx^HA_2\bx \\ \vdots \\ \bx^HA_m\bx \end{pmatrix} ~: ~{\bx^H\bx = 1}, \bx \in \mH^n \right\}.
    \end{equation*}
\end{definition}
Regarding the convexity of the joint numerical ranges, Hausdorff \cite{hausdorff1919wertvorrat} showed that
\begin{center}`` If $A_1$ and $A_2$ are complex Hermitian, then $\mathcal{F}(A_1, A_2)$ is a convex set."\end{center}
In a slightly different form, Brickman \cite{brickman1961field} extended the above result to three matrices, i.e., suppose $A_1, A_2, A_3$ are $n \times n$ complex Hermitian matrices, then
\begin{equation}\label{form-Brickman}\left\{ \begin{pmatrix} \bx^HA_1\bx \\ \bx^HA_2\bx \\ \bx^HA_3\bx \end{pmatrix} ~:~ \bx \in \mC^n \right\}
\end{equation}
is a convex set, where $\mC^n$ is the set of complex vectors.

As a matter of fact, the convexity of the joint numerical ranges in complex domain can also be extended to quaternion domain for more matrices. In particular, Au-Yeung and Poon \cite{au1979remark}  established the following result for quaternion Hermitian matrices.

\begin{theorem}[Au-Yeung and Poon \cite{au1979remark}]
\label{Yeung_theo}
    If $n \not= 2$ and $A_1, \cdots, A_5 \in \mcH^{n}$ are quaternion hermitian matrices, then
    \begin{equation}
        \left\{ \begin{pmatrix} \bx^HA_1\bx \\ \bx^HA_2\bx \\ \vdots \\ \bx^HA_5\bx \end{pmatrix} ~:~ \bx^H\bx = 1, \bx \in \mH^n \right\}
    \end{equation}
    is a convex set.
\end{theorem}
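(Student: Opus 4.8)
The plan is to derive the theorem from the rank-one decomposition of Theorem~\ref{qua_decomposition}. Write $W:=\mathcal{F}(A_1,\dots,A_5)$ and consider the lifted set
$\widehat W:=\{(A_1\bullet X,\dots,A_5\bullet X):X\in\mathcal{H}_+^n,\ \tr X=1\}$.
As a linear image of the compact convex set $\{X\succeq0:\tr X=1\}$, the set $\widehat W$ is convex, and taking $X=\mathbf{x}\mathbf{x}^H$ shows $W\subseteq\widehat W$; so it suffices to prove the reverse inclusion $\widehat W\subseteq W$, i.e.\ that every $X\succeq0$ with $\tr X=1$ can be replaced by a unit vector $\mathbf{x}$ having the same five inner products with $A_1,\dots,A_5$. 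Replacing $A_k$ by $A_k-(A_k\bullet X)I$, which affects neither the hypotheses nor the conclusion, this reduces to the following: given $X\succeq0$, $\tr X=1$, with $A_k\bullet X=0$ for all $k$, produce a nonzero $\mathbf{x}$ with $\mathbf{x}^HA_k\mathbf{x}=0$ for $k=1,\dots,5$, and then normalize.

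First I would drive down the rank of $X$. If $r:=\rank X\ge3$, the Hermitian matrices supported on $\mathrm{range}\,X$ form a real vector space of dimension $2r^2-r>6$, which strictly outnumbers the six linear conditions ``$\bullet\,A_k=0$ $(k=1,\dots,5)$ and $\bullet\,I=0$''; hence some nonzero Hermitian $D$ with $\mathrm{range}\,D\subseteq\mathrm{range}\,X$ satisfies $A_k\bullet D=0$ and $\tr D=0$, and sliding $X$ along $\pm D$ to the boundary of the PSD cone lowers $\rank X$ while preserving $\tr X=1$ and all five inner products. So one may assume $r\le2$; the case $r=1$ is exactly what is wanted, so the real content is the case $r=2$. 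This is precisely where the hypothesis $n\ne2$ enters: for $n=2$ a rank-$2$ matrix has full support and the manoeuvre above has no room to operate — matching the fact that the statement genuinely fails for $n=2$ — whereas for $n\ge3$ there is always a direction outside $\mathrm{range}\,X$ available.

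For $r=2$, apply Theorem~\ref{qua_decomposition} to $X$ with the four Hermitian matrices $I,A_1,A_2,A_3$. This yields $X=\mathbf{x}_1\mathbf{x}_1^H+\mathbf{x}_2\mathbf{x}_2^H$ with $\mathbf{x}_i^H\mathbf{x}_i=\tfrac12$ and $\mathbf{x}_i^HA_k\mathbf{x}_i=0$ for $k=1,2,3$; hence $\mathbf{p}:=\sqrt2\,\mathbf{x}_1$ and $\mathbf{q}:=\sqrt2\,\mathbf{x}_2$ are unit vectors with $\mathbf{p}^HA_k\mathbf{p}=\mathbf{q}^HA_k\mathbf{q}=0$ for $k=1,2,3$ and $\mathbf{p}^HA_k\mathbf{p}=-\,\mathbf{q}^HA_k\mathbf{q}$ for $k=4,5$. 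If $\mathbf{p}^HA_4\mathbf{p}=\mathbf{p}^HA_5\mathbf{p}=0$ we are done. Otherwise $\mathbf{p},\mathbf{q}$ exhibit two points of the restricted range
$N:=\{(\mathbf{y}^HA_4\mathbf{y},\mathbf{y}^HA_5\mathbf{y}):\mathbf{y}^H\mathbf{y}=1,\ \mathbf{y}^HA_j\mathbf{y}=0\ (j=1,2,3)\}\subseteq\mathbb R^2$
that are symmetric about the origin, and it remains to show $0\in N$. Here I would use $n\ge3$: since $\mathrm{span}(\mathbf{p},\mathbf{q})$ is a proper subspace, $\mathbf{y}$ can be deformed out of it, and I would combine this with an intermediate-value argument along a path joining $\mathbf{p}$ to $\mathbf{q}$ inside the variety $\{\mathbf{y}^H\mathbf{y}=1,\ \mathbf{y}^HA_j\mathbf{y}=0\ (j=1,2,3)\}$ — reapplying Theorem~\ref{qua_decomposition} with $I,A_1,A_2,A_3$ to the rank-$2$ matrices encountered along the way so as to keep those three forms pinned at $0$ — first forcing the $A_4$-coordinate to vanish, and then the $A_5$-coordinate. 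The main obstacle is exactly this last step: purely algebraic manipulation inside the $2$-dimensional span of $\mathbf{p}$ and $\mathbf{q}$ provably does not furnish enough degrees of freedom (this is the one-matrix gap between the four handled by Theorem~\ref{qua_decomposition} and the five in the statement), so the argument must genuinely leave that span, which is possible only because $n\ne2$.
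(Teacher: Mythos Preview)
The paper does \emph{not} prove Theorem~\ref{Yeung_theo}: it is quoted from Au--Yeung and Poon~\cite{au1979remark} as a known result, and the paper instead proves the unnormalized variant (Theorem~\ref{joint_num_range_quaternion}), where the constraint $\bx^H\bx=1$ is dropped. That proof applies Theorem~\ref{qua_decomposition} to the four matrices $A_k-\tfrac{v_k}{v_5}A_5$ $(k=1,\dots,4)$ and then \emph{rescales} the resulting vector by a factor $\rho=\sqrt{v_5/\bx_1^HA_5\bx_1}$ to match the fifth coordinate. The rescaling is exactly the step that is unavailable once $\bx^H\bx=1$ is imposed, which is why the paper does not claim Theorem~\ref{Yeung_theo} as a consequence of its decomposition.

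Your proposal has a genuine gap, and you already identify where it lies. The rank-reduction to $r\le 2$ is fine (the dimension count $2r^2-r>6$ for $r\ge 3$ is correct over $\mathbb{H}$), and the use of Theorem~\ref{qua_decomposition} with $(I,A_1,A_2,A_3)$ to produce antipodal points $\mathbf{p},\mathbf{q}$ in the $(A_4,A_5)$-plane is legitimate. But the final ``intermediate-value along a path'' step is not a proof: a continuous path in $\mathbb{R}^2$ from $(\alpha,\beta)$ to $(-\alpha,-\beta)$ need not pass through the origin, so killing $A_4$ and $A_5$ one at a time does not work without further structure; you have not established that the variety $\{\mathbf{y}^H\mathbf{y}=1,\ \mathbf{y}^HA_j\mathbf{y}=0,\ j=1,2,3\}$ is even path-connected; and ``reapplying Theorem~\ref{qua_decomposition} along the way'' cannot pin a fifth matrix, since the decomposition handles only four. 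The hypothesis $n\ne 2$ does give you an extra direction outside $\mathrm{span}(\mathbf{p},\mathbf{q})$, but you have not shown how to exploit it to close the gap. In short, the outline up to the rank-$2$ case is sound, but the rank-$2$ case --- which is the entire content of the theorem --- remains unproved; Au--Yeung and Poon's original argument does not go through Theorem~\ref{qua_decomposition} at all.
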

Note that the above theorem assumes $n \not= 2$ and this condition is necessary as
a counterexample is presented in \cite{rodman2014topics}. However, such condition can be relaxed if we consider the slightly different form of \eqref{form-Brickman} by Brickman, where the restriction $\bx^H\bx=1$ is dropped. We present this result in Theorem \ref{joint_num_range_quaternion}, which is proved by our quaternion matrix rank-one decomposition theorem.

\begin{theorem}\label{joint_num_range_quaternion}
    Suppose that $A_k \in \mcH^{n}$, $k = 1, \cdots, 5$. Then
    \begin{equation}
        \left\{ \begin{pmatrix} \bx^HA_1\bx \\ \bx^HA_2\bx \\ \cdots \\ \bx^HA_5\bx \end{pmatrix} ~:~ \bx \in \mH^n \right\}
    \end{equation}
    is a convex set.
\end{theorem}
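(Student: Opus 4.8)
The plan is to exploit that the set in question is a \emph{cone}, so that convexity reduces to closure under addition, and then to supply the required vector by a short geometric argument about the $2\times 2$ positive semidefinite cone (with Theorem \ref{qua_decomposition} entering to make one step constructive).

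Write $S$ for the set in the statement. Since $0\in S$ (take $\bx=0$) and $tw\in S$ for every $t\ge 0$ whenever $w\in S$ (replace $\bx$ by $\sqrt t\,\bx$), $S$ is a cone; hence $S$ is convex once $S+S\subseteq S$, because $\lambda w_1+(1-\lambda)w_2=(\lambda w_1)+((1-\lambda)w_2)$ with $\lambda w_1,(1-\lambda)w_2\in S$. So fix $w_1=(\bx^HA_k\bx)_{k=1}^5$ and $w_2=(\by^HA_k\by)_{k=1}^5$, and set $X:=\bx\bx^H+\by\by^H\succeq 0$, so that $\rank X\le 2$. Since each $A_k$ is Hermitian, $\bx^HA_k\bx\in\mR$ and $A_k\bullet X=\bx^HA_k\bx+\by^HA_k\by=(w_1+w_2)_k$; thus it suffices to find $\bz\in\mH^n$ with $\bz^HA_k\bz=A_k\bullet X$ for $k=1,\dots,5$, for then $w_1+w_2=(\bz^HA_k\bz)_k\in S$. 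If $\rank X\le 1$, then $X=\bz\bz^H$ already works, so the content is the case $\rank X=2$.

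For $\rank X=2$, restrict to $\operatorname{range}(X)$: pick $E\in\mH^{n\times 2}$ with $E^HE=I_2$ and $\operatorname{range}(E)=\operatorname{range}(X)$; then $X=E\tilde XE^H$ with $\tilde X:=E^HXE\in\mcH_{++}^{2}$, and with $\tilde A_k:=E^HA_kE\in\mcH^{2}$ one has $\tilde A_k\bullet\tilde X=A_k\bullet X$ and $(E\tilde\bz)^HA_k(E\tilde\bz)=\tilde\bz^H\tilde A_k\tilde\bz$. So it is enough to find $\tilde\bz\in\mH^{2}$ with $\tilde\bz^H\tilde A_k\tilde\bz=\tilde A_k\bullet\tilde X$ for all $k$, and then put $\bz=E\tilde\bz$. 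Now work inside the real vector space $\mcH^{2}$, of dimension $6$. The affine slice $\mathcal A:=\{Y\in\mcH^{2}:\tilde A_k\bullet Y=\tilde A_k\bullet\tilde X,\ k=1,\dots,5\}$ has dimension at least $6-5=1$ and contains $\tilde X\succ 0$, so it contains a line $L$ through $\tilde X$. Since $\mcH_+^{2}$ is a closed convex cone containing no line, $L\not\subseteq\mcH_+^{2}$; thus $L\cap\mcH_+^{2}$ is a nonempty, proper, closed subinterval of $L$ whose relative interior contains $\tilde X$ (because $\tilde X\succ 0$), so it has a finite endpoint $Y^\ast\ne\tilde X$. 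An endpoint cannot be an interior point of $\mcH_+^{2}$, so $Y^\ast$ is a singular positive semidefinite $2\times 2$ quaternion matrix, hence of rank $\le 1$, i.e.\ $Y^\ast=\tilde\bz\tilde\bz^H$; and $Y^\ast\in\mathcal A$ gives $\tilde\bz^H\tilde A_k\tilde\bz=\tilde A_k\bullet\tilde X$, which finishes the proof. (A rank-$\le 1$ member of $\mathcal A$ can also be produced constructively: apply Theorem \ref{qua_decomposition} to $A_1,\dots,A_4$ to split $X=\bz_1\bz_1^H+\bz_2\bz_2^H$ with $\bz_i^HA_k\bz_i=\frac{A_k\bullet X}{2}$ for $k\le 4$, then move along a one-parameter family of unit-norm combinations of $\bz_1,\bz_2$ that preserves the first four inner products and apply the intermediate value theorem to the fifth; the endpoint argument above is just a cleaner packaging. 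When $n\ne 2$ one may instead invoke Theorem \ref{Yeung_theo} and pass to conic hulls.)

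I expect the rank-$2$ step to be the only real obstacle, and inside it the decisive point is the bookkeeping behind the number $5$: the reduction to $\mcH^{2}$ is what makes the boundary of $\mcH_+^{2}$ consist solely of rank-$\le 1$ matrices, and $\dim_{\mR}\mcH^{2}=6>5$ is exactly what forces a line through $\tilde X$ inside the constraint slice — so the argument is tight at five Hermitian matrices and breaks at six. Some additional care is needed to confirm that $Y^\ast$ is genuinely positive semidefinite (as a limit of points of $L\cap\mcH_+^{2}$) and hence equal to $\tilde\bz\tilde\bz^H$, and that $\tilde X$ is positive \emph{definite} on $\operatorname{range}(X)$ so that it is an interior point of $\mcH_+^{2}$.
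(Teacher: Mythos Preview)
Your proof is correct and takes a genuinely different route from the paper's. The paper proves the stronger identity
\[
\left\{(\bx^HA_k\bx)_{k=1}^5:\bx\in\mH^n\right\}=\left\{(A_k\bullet X)_{k=1}^5:X\succeq 0\right\}
\]
by applying Theorem \ref{qua_decomposition} directly: given any $X\succeq 0$ of rank $r$, it decomposes $X$ with respect to the four matrices $A_k-\tfrac{v_k}{v_5}A_5$ ($k\le 4$), and then rescales one summand to match the fifth coordinate. Your argument instead reduces to the rank-two case via the cone property $S+S\subseteq S$, compresses to $\mcH^{2}$, and uses the dimension count $\dim_{\mR}\mcH^{2}=6>5$ together with a Barvinok--Pataki style boundary/rank-reduction step; Theorem \ref{qua_decomposition} appears only in your parenthetical constructive alternative and is not logically needed. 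What this buys you is a shorter, more self-contained proof that makes the tightness at five matrices transparent (the inequality $6>5$). What the paper's route buys is (i) a fully constructive procedure via Algorithm \ref{ACQMD1}, and (ii) the equality with the PSD image for \emph{arbitrary} rank $X$, which, while not needed for the statement of Theorem \ref{joint_num_range_quaternion} itself or for the $\mathcal{S}$-procedure corollary, is the natural bridge to the QCQP application in Section \ref{sec:qua_quadratic_opt}.
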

\begin{proof}
    To complete the proof, it suffices to show
    \begin{equation}
        \left\{ \begin{pmatrix} \bx^HA_1\bx \\ \bx^HA_2\bx \\ \cdots \\ \bx^HA_5\bx \end{pmatrix} ~:~ \bx \in \mH^n \right\} = \left\{ \begin{pmatrix} A_1 \bullet X \\ A_2 \bullet X \\ \cdots \\ A_5 \bullet X \end{pmatrix} ~:~ X \succeq 0 \right\} \notag
    \end{equation}
    as the right-hand side is a convex set.
    Noting that the right-hand side is the convex hull of the left-hand side, we only need to show the right-hand side is included by the left-hand side.

    Take any nonzero vector
    \begin{equation}
        \begin{pmatrix} v_1 \\ v_2 \\ \cdots \\ v_5 \end{pmatrix} \in \left\{ \begin{pmatrix} A_1 \bullet X \\ A_2 \bullet X \\ \cdots \\ A_5 \bullet X \end{pmatrix} ~:~ X \succeq 0 \right\}. \notag
    \end{equation}
    Without loss of generality, suppose $v_5 \not= 0$ and $(A_k - \frac{v_k}{v_5}A_5) \bullet X = 0, ~{\rm for} \ k = 1, 2, 3, 4$.
    Then, by Theorem \ref{qua_decomposition}, there exits a rank-one decomposition $X = \sum_{i=1}^r\bx_i\bx_i^H$ such that
    \begin{equation}\label{sys_v1_v5}
        \bx_i^H\left(A_k - \frac{v_k}{v_5}A_5\right)\bx_i = \frac{(A_k - \frac{v_k}{v_5}A_5) \bullet X}{r} = 0, \ k = 1,2, 3, 4
    \end{equation}
    for $i = 1, \cdots, r$.
    Since $v_5 = A_5 \bullet X = \sum_{i=1}^r\bx_i^HA_5\bx_i$, there is at least one vector, say $\bx_1$ such that $\bx_1^HA_5\bx_1$ shares the same sign as $v_5$.
    Let $\rho = \sqrt{\frac{v_5}{\bx_1^HA_5\bx_1}}$ and $\bx = \rho \bx_1$, then we have
    \begin{equation*}
        \bx^HA_5\bx = \rho^2\bx_1^HA_5\bx_1 = v_5,
    \end{equation*}
    and \eqref{sys_v1_v5} also holds for $\bx$.
    Plugging $\bx$ and $\bx^HA_5\bx = v_5$ into \eqref{sys_v1_v5}, we have
    \begin{equation*}
    \bx^HA_k\bx=\bx^H\frac{v_k}{v_5}A_5\bx=\frac{v_k}{v_5}\cdot v_5=v_k
    \end{equation*}
    for $k = 1, 2, 3, 4$. Hence the vector $ \begin{pmatrix} v_1 \\ v_2 \\ \cdots \\ v_5 \end{pmatrix} \in \left\{ \begin{pmatrix} \bx^HA_1\bx \\ \bx^HA_2\bx \\ \cdots \\ \bx^HA_5\bx \end{pmatrix} ~:~ x \in \mH^n \right\}$ and   the conclusion follows.
\end{proof}

\section{S-Procedure}\label{sec:s_lem}
It is well known that $\mathcal{S}$-procedure plays an important role in robust optimization \cite{anitescu2000degenerate,anitescu2002superlinearly,goldfarb2003robust}, statistics \cite{hoerl1970ridge}, signal processing \cite{luo2003applications}, control and stability problems \cite{ben2001lectures,boyd1994linear}, among others. In this section, we discuss
another interesting {theoretical implication} of our new rank-one decomposition (Theorem \ref{qua_decomposition}) in $\mathcal{S}$-procedure. We first recall the following lemma, which is due to \cite{yakubovich1971s}, about
$\mathcal{S}$-procedure in real domain.

\begin{lemma}{(\bf $\mathcal{S}$-Procedure)} Let $F(\bx)=\bx^{\top}A_0\bx + 2\bb_0^{\top}\bx + c_0$ and $G_i(\bx) = \bx^{\top}A_i\bx + 2\bb_i^{\top}\bx + c_i$, $i=1,\ldots,m$ be quadratic functions of $\bx \in \mR^n$. Then $F(\bx) \ge 0$ for all $\bx$ such that $G_i(\bx)\ge 0$, $i=1,\ldots,m$, if there exist $\tau_i \ge 0 $ such that
\begin{eqnarray*}
 \left[ \begin{array}{cc}c_0 & \bb_0^{\top}\\
 \bb_0 & A_{0}\\
\end{array} \right]
- \sum\limits_{i=1}^{p}\tau_i \cdot
\left[ \begin{array}{cc}c_i & \bb_i^{\top}\\
 \bb_i & A_i
 \end{array} \right] \succeq 0.
 \end{eqnarray*}
Moreover, if $m=1$ then the converse holds if there exists $\bx_0$ such that $F(\bx_0)>0$.
\end{lemma}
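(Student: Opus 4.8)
This is the classical $\mathcal{S}$-lemma of Yakubovich, so the plan is the standard ``homogenize and separate'' argument. For $\bx\in\mR^{n}$ write $\by:=(1,\bx^{\top})^{\top}\in\mR^{n+1}$ and introduce the bordered symmetric matrices $\widehat A_0:=\left[\begin{smallmatrix}c_0 & \bb_0^{\top}\\ \bb_0 & A_0\end{smallmatrix}\right]$ and $\widehat A_i:=\left[\begin{smallmatrix}c_i & \bb_i^{\top}\\ \bb_i & A_i\end{smallmatrix}\right]$ for $i=1,\dots,m$, so that $F(\bx)=\by^{\top}\widehat A_0\by$ and $G_i(\bx)=\by^{\top}\widehat A_i\by$. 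The ``if'' direction is then immediate: multiplying the hypothesized semidefinite inequality on both sides by $\by$ gives $F(\bx)-\sum_i\tau_iG_i(\bx)\ge 0$ for every $\bx$, and since each $\tau_i\ge 0$ and each $G_i(\bx)\ge 0$ on the feasible set, $F(\bx)\ge\sum_i\tau_iG_i(\bx)\ge 0$. I expect this half to be routine.

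For the converse ($m=1$) I would first promote the assumption to one about genuine homogeneous forms on $\mR^{n+1}$: that $\by^{\top}\widehat A_0\by\ge 0$ whenever $\by^{\top}\widehat A_1\by\ge 0$, for every $\by=(t,\bz^{\top})^{\top}$. For $t\ne 0$ this is just the original implication applied to $\bz/t$ and rescaled by $t^{2}$. For the ``directions at infinity'' $t=0$, fix $\bz$ with $\bz^{\top}A_1\bz\ge 0$ and consider the one-variable restriction $s\mapsto G_1(\bx_0+s\bz)$, where $\bx_0$ is a point with $G_1(\bx_0)>0$ (the strict-feasibility hypothesis): this is a quadratic in $s$ with nonnegative leading coefficient $\bz^{\top}A_1\bz$ and positive value $G_1(\bx_0)$ at $s=0$, hence positive for arbitrarily large $s$ in at least one direction; there $F(\bx_0+s\bz)\ge 0$, and since the leading coefficient of $s\mapsto F(\bx_0+s\bz)$ is $\bz^{\top}A_0\bz$, dividing by $s^{2}$ and sending $s\to\infty$ gives $\bz^{\top}A_0\bz\ge 0$. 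This handles $t=0$ with no case split on the eigenstructure of $A_1$.

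With the problem now homogeneous, the crux is the classical Dines convexity theorem (the two-form real ancestor of the joint numerical range results of Section~\ref{sec:joint_num_range}): the image $D:=\{(\by^{\top}\widehat A_0\by,\;\by^{\top}\widehat A_1\by):\by\in\mR^{n+1}\}$ is a convex cone in $\mR^{2}$. The homogeneous hypothesis says exactly that $D$ misses the open convex cone $\{(u,v):u<0,\;v>0\}$, so a line through the origin separates the two: there is $(\lambda,\mu)\ne(0,0)$ with $\lambda u-\mu v\ge 0$ on $D$ and $\lambda u-\mu v\le 0$ on that open cone. Reading off signs on the open cone forces $\lambda\ge 0$ and $\mu\ge 0$, while the inequality on $D$ reads $\by^{\top}(\lambda\widehat A_0-\mu\widehat A_1)\by\ge 0$ for all $\by$, i.e.\ $\lambda\widehat A_0-\mu\widehat A_1\succeq 0$. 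Invoking $\bx_0$ once more excludes $\lambda=0$ (otherwise $\mu>0$ and $\widehat A_1\preceq 0$, contradicting $G_1(\bx_0)>0$), so $\tau:=\mu/\lambda\ge 0$ satisfies $\widehat A_0-\tau\widehat A_1\succeq 0$, which is exactly the asserted inequality for $m=1$.

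I expect the real obstacle to be this converse direction: concretely, the homogenization step (getting the $t=0$ directions right) together with the appeal to Dines convexity and the verification that the separating functional is non-degenerate ($\lambda>0$); the ``if'' direction and all the matrix bookkeeping are mechanical.
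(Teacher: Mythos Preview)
The paper does not actually prove this lemma; it is simply quoted as a classical result due to Yakubovich, so there is no ``paper's own proof'' to compare against. Your argument is the standard and correct one for the $\mathcal{S}$-lemma: the forward direction is immediate from the semidefinite inequality, and for the converse you homogenize, invoke Dines' convexity of the joint range of two real quadratic forms, separate, and use strict feasibility to rule out degeneracy of the separating functional. This is exactly the template the paper itself follows later for its quaternion extension (Theorem in Section~\ref{sec:s_lem}), with Theorem~\ref{joint_num_range_quaternion} playing the role of Dines' theorem.

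One genuine issue you should flag: in your converse argument you repeatedly use a point $\bx_0$ with $G_1(\bx_0)>0$, calling it ``the strict-feasibility hypothesis.'' But the lemma \emph{as printed} assumes instead that $F(\bx_0)>0$. These are not the same, and the printed version is in fact false. For $n=1$ take $F(x)=-x^{2}+x$ and $G_1(x)=-x^{2}$: then $G_1(x)\ge 0$ only at $x=0$, where $F(0)=0\ge 0$, so the implication holds; and $F(1/2)=1/4>0$, so the printed hypothesis is satisfied. Yet $F(x)-\tau G_1(x)=(\tau-1)x^{2}+x$ is never nonnegative on all of $\mR$ (for $\tau\le 1$ it is unbounded below or linear; for $\tau>1$ its minimum is $-1/(4(\tau-1))<0$), so no $\tau\ge 0$ works. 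Your proof is correct for the intended statement with $G_1(\bx_0)>0$ (which is also the form the paper uses in its own quaternion theorem via \eqref{s-procedure-x0}); you are silently correcting a typo, and you should say so explicitly rather than let the mismatch stand.
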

We remark that the above result essentially study the relationship between
\begin{equation}\label{S-procedure1}
G_1(\bx) \ge 0, G_2(\bx) \ge 0, \cdots, G_m(\bx) \ge 0 \Rightarrow F(\bx) \ge 0,
\vspace{-1.5mm}
\end{equation}
and
\begin{equation}\label{S-procedure2}
\exists \tau_1 \ge 0, \tau_2 \ge 0, \cdots, \tau_m \ge 0 \ {\rm such \ that} \ F(\bx) - \sum_{i=1}^m \tau_iG_i(\bx) \ge 0 \ \forall \bx.
\end{equation}
It is obvious that \eqref{S-procedure2} implies \eqref{S-procedure1}, and the converse also holds when $m=1$.
In the following, we call $\mathcal{S}$-procedure is {\it lossless} if \eqref{S-procedure1} and \eqref{S-procedure2} are equivalent. Although $\mathcal{S}$-procedure was first studied in the real domain, interestingly, a stronger result has been established in the complex domain.

\begin{lemma}{(Yakubovich \cite{yakubovich1971s})}
    Suppose $F, G_1, G_2$ are Hermitian forms and satisfy \eqref{S-procedure1}. Moreover, there is $\bx_0 \in \mathbb{C}^n$ such that $G_i(\bx_0) > 0, i = 1,2$. Then the S-procedure is lossless for $m = 2$.
\end{lemma}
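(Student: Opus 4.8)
The plan is to establish the non-trivial direction \eqref{S-procedure1}$\,\Rightarrow\,$\eqref{S-procedure2} (the converse is immediate), with a rank-one decomposition playing the same role it plays for Theorem~\ref{joint_num_range_quaternion} --- except that in the complex, three-form setting the relevant tool is the two-matrix decomposition of Huang and Zhang \cite{huang2007complex} recalled above. Write $F(\bx)=\bx^HF\bx$ and $G_i(\bx)=\bx^HG_i\bx$ with $F,G_1,G_2\in\mcC^n$ (overloading notation for the defining Hermitian matrices), and set
$$\mathcal{M}:=\big\{(\bx^HG_1\bx,\ \bx^HG_2\bx,\ \bx^HF\bx):\ \bx\in\mC^n\big\}\subseteq\mR^3.$$
The core step is to show that $\mathcal{M}$ is a convex cone, in fact $\mathcal{M}=\{(G_1\bullet X,\ G_2\bullet X,\ F\bullet X):X\succeq0\}$.

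This convexity is the complex three-form analogue of Theorem~\ref{joint_num_range_quaternion}, and I would prove it by the very same device. One inclusion is trivial; for the other, take a nonzero $(v_1,v_2,v_3)=(G_1\bullet X,G_2\bullet X,F\bullet X)$ with $X\succeq0$ of rank $r$, and (relabeling the three forms) assume $v_3\neq0$, so that automatically $(G_k-\tfrac{v_k}{v_3}F)\bullet X=0$ for $k=1,2$. Applying the Huang--Zhang decomposition to $X$ relative to the two Hermitian matrices $G_k-\tfrac{v_k}{v_3}F$ produces $X=\sum_{i=1}^r\bx_i\bx_i^H$ with $\bx_i^H(G_k-\tfrac{v_k}{v_3}F)\bx_i=0$ for every $i$ and $k=1,2$; since $\sum_i\bx_i^HF\bx_i=v_3\neq0$, some $\bx_1$ has $\bx_1^HF\bx_1$ of the same sign as $v_3$, and rescaling $\bx=\rho\bx_1$ with $\rho^2=v_3/(\bx_1^HF\bx_1)$ gives $\bx^HF\bx=v_3$ and hence $\bx^HG_k\bx=\tfrac{v_k}{v_3}v_3=v_k$, so $(v_1,v_2,v_3)\in\mathcal{M}$.

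Given convexity, the remainder is standard convex analysis. Condition \eqref{S-procedure1} says exactly that $\mathcal{M}$ is disjoint from the open convex cone $K:=\{(s_1,s_2,t):s_1\geq0,\ s_2\geq0,\ t<0\}$; separating these two nonempty convex sets (no closedness of $\mathcal{M}$ is needed since $K$ is open) yields $(\mu_1,\mu_2,\nu)\neq0$ with $\nu\,\bx^HF\bx+\mu_1\,\bx^HG_1\bx+\mu_2\,\bx^HG_2\bx\geq0$ for all $\bx\in\mC^n$ (nonnegativity on $\mathcal{M}$ because it is a cone through the origin), while testing the separating functional on $(1,0,0),(0,1,0),(0,0,-1)\in\overline{K}$ forces $\mu_1\leq0$, $\mu_2\leq0$, $\nu\geq0$. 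The Slater point then rules out $\nu=0$: otherwise $\mu_1 G_1(\bx_0)+\mu_2 G_2(\bx_0)\geq0$, impossible since $G_i(\bx_0)>0$ and $\mu_1,\mu_2$ are nonpositive and not both zero. Hence $\nu>0$, and $\tau_i:=-\mu_i/\nu\geq0$ satisfy $F(\bx)-\tau_1G_1(\bx)-\tau_2G_2(\bx)\geq0$ for all $\bx$, i.e. $F-\tau_1G_1-\tau_2G_2\succeq0$, which is \eqref{S-procedure2}.

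The only genuinely non-elementary ingredient is the convexity of $\mathcal{M}$: it is precisely there that the rank-one decomposition is indispensable, since it is what lets a general positive semidefinite $X$ be collapsed onto a single vector while keeping the two quotiented inner products $(G_k-\tfrac{v_k}{v_3}F)\bullet X$ pinned at zero --- everything downstream, the separation and the Slater step, is textbook. The same scheme will transfer essentially verbatim to four quaternion Hermitian forms once Theorem~\ref{qua_decomposition} is invoked in place of the Huang--Zhang decomposition.
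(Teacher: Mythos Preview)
The paper does not prove this lemma --- it is quoted without proof as a known result of Yakubovich, and serves only to motivate the quaternion extension (the theorem immediately following it). So there is no ``paper's own proof'' to compare against.

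That said, your argument is correct and is precisely the complex analogue of the proof the paper supplies for that quaternion theorem: establish convexity of the joint range via the appropriate rank-one decomposition (Huang--Zhang for two complex Hermitian matrices, in place of Theorem~\ref{qua_decomposition}), then separate from the forbidden cone and use the Slater point to force the coefficient on $F$ to be strictly positive. Your closing remark that the scheme transfers verbatim to four quaternion forms is exactly what the paper carries out.

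One small slip: the set $K=\{(s_1,s_2,t):s_1\geq0,\ s_2\geq0,\ t<0\}$ is not open (points with $s_1=0$ or $s_2=0$ are boundary points), so the parenthetical ``since $K$ is open'' is false as written. This does no damage to the argument: either separate $\mathcal{M}$ from the open interior $\{s_1>0,\ s_2>0,\ t<0\}$ --- which is what the paper does in the quaternion proof, using $\mathbb{E}=\{y_0<0,\ y_i>0\}$ --- or simply invoke separation of disjoint convex sets in finite dimensions. Either way the rest of your proof goes through unchanged.
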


Inspired by such result, we further consider $\mathcal{S}$-procedure in the quaternion domain and manage to show that it is {\it lossless} for $m = 4$ by Theorem \ref{qua_decomposition}.

\begin{theorem}
    Suppose $F, G_1, G_2, G_3, G_4$ are quaternion Hermitian forms and satisfy \eqref{S-procedure1}. Moreover, there is \begin{equation}\label{s-procedure-x0}
   \bx_0 \in \mathbb{H}^n~\mbox{such that}~G_i(\bx_0) > 0,~ i = 1, 2, 3, 4.
    \end{equation}
    Then the S-procedure is lossless for $m = 4$.
\end{theorem}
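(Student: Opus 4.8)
The plan is to reduce the claim to a separating-hyperplane argument in $\mR^5$, powered by the convexity of the quaternion joint numerical range (Theorem \ref{joint_num_range_quaternion}), which itself rests on the rank-one decomposition Theorem \ref{qua_decomposition}. Write the five Hermitian forms as $F(\bx)=\bx^H A_0\bx$ and $G_j(\bx)=\bx^H A_j\bx$ with $A_0,A_1,\dots,A_4\in\mcH^n$, and set
\[
\mathcal{J}:=\left\{\left(\bx^H A_1\bx,\ \bx^H A_2\bx,\ \bx^H A_3\bx,\ \bx^H A_4\bx,\ \bx^H A_0\bx\right)^\top:\bx\in\mH^n\right\}\subseteq\mR^5 .
\]
Applying Theorem \ref{joint_num_range_quaternion} to the five matrices $A_1,\dots,A_4,A_0$ shows that $\mathcal{J}$ is convex; moreover it contains the origin (take $\bx=0$) and is closed under multiplication by nonnegative reals (replacing $\bx$ by $t\bx$ scales the point by $t^2$). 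The hypothesis \eqref{S-procedure1} says exactly that any point of $\mathcal{J}$ whose first four coordinates are nonnegative has nonnegative last coordinate; equivalently, $\mathcal{J}$ is disjoint from the nonempty open convex cone $\mathcal{O}:=\{y\in\mR^5:y_1,y_2,y_3,y_4>0,\ y_5<0\}$.

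First I would separate $\mathcal{J}$ from $\mathcal{O}$: since both are convex, $\mathcal{J}\cap\mathcal{O}=\varnothing$, and $\mathcal{O}$ is open, there is a nonzero $\lambda=(\lambda_1,\dots,\lambda_4,\lambda_0)\in\mR^5$ and $\gamma\in\mR$ with $\lambda^\top y\le\gamma$ on $\mathcal{J}$ and $\lambda^\top y\ge\gamma$ on $\mathcal{O}$. Because $0\in\mathcal{J}$ and $0\in\overline{\mathcal{O}}$, one gets $\gamma=0$, so $\lambda^\top y\le 0$ for all $y\in\mathcal{J}$ and $\lambda^\top y\ge 0$ for all $y\in\overline{\mathcal{O}}=\mR_{\ge0}^4\times\mR_{\le0}$. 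Testing the second inequality on the generators $e_1,\dots,e_4$ and $-e_5$ of $\overline{\mathcal{O}}$ forces $\lambda_1,\dots,\lambda_4\ge0$ and $\lambda_0\le0$.

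The remaining step, which I expect to be the delicate point, is to exclude $\lambda_0=0$ using the strict feasibility \eqref{s-procedure-x0}. If $\lambda_0=0$, then $\lambda^\top y\le 0$ on $\mathcal{J}$ reads $\sum_{j=1}^4\lambda_j\bx^H A_j\bx\le0$ for all $\bx$; evaluating at $\bx_0$ gives $\sum_{j=1}^4\lambda_j G_j(\bx_0)\le0$, which contradicts $G_j(\bx_0)>0$ for all $j$ together with $\lambda_1,\dots,\lambda_4\ge0$ not all zero (as $\lambda\neq0$ and $\lambda_0=0$). Hence $\lambda_0<0$. Put $\tau_j:=\lambda_j/(-\lambda_0)\ge0$; dividing $\sum_{j=1}^4\lambda_j\bx^H A_j\bx+\lambda_0\bx^H A_0\bx\le0$ by $-\lambda_0>0$ yields $\bx^H A_0\bx-\sum_{j=1}^4\tau_j\bx^H A_j\bx\ge0$, i.e.\ $F(\bx)-\sum_{j=1}^4\tau_j G_j(\bx)\ge0$ for every $\bx\in\mH^n$. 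This is precisely \eqref{S-procedure2} with $m=4$, so \eqref{S-procedure1} and \eqref{S-procedure2} are equivalent and the $\mathcal{S}$-procedure is lossless. The only genuinely non-classical ingredient is the convexity of the five-matrix joint numerical range over $\mH$ — this is where Theorem \ref{qua_decomposition} does the work — while the rest is standard convex-separation bookkeeping; I would also note that no closedness of $\mathcal{J}$ is needed, since it is separated from an open set.
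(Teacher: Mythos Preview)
Your proof is correct and follows essentially the same route as the paper's: invoke Theorem \ref{joint_num_range_quaternion} to obtain convexity of the five-matrix joint numerical range, separate it from the open orthant $\{y_0<0,\ y_i>0\}$, read off the sign constraints on the separating vector, and use the Slater point \eqref{s-procedure-x0} to rule out the degenerate case where the coefficient of $F$ vanishes. Apart from cosmetic differences (you place $A_0$ in the last coordinate and flip the sign of the separating functional), the argument matches the paper's proof step for step.
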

\begin{proof} To prove the conclusion, it suffices to establish \eqref{S-procedure2} for $m=4$.
    Let $F(\bx) = \bx^HA_0\bx$ and $G_i(\bx) = \bx^HA_i\bx$ for $i = 1, 2, 3, 4$. Consider the following set
    $$
    \mathbb{D}=\left\{ \begin{pmatrix} \bx^HA_0\bx \\ \bx^HA_1\bx \\ \cdots \\ \bx^HA_4\bx \end{pmatrix} ~:~ \bx \in \mH^n \right\},
    $$
    which is a convex cone in $\mR^5$ by Theorem \ref{joint_num_range_quaternion}. Moreover, since $F(\bx)$ and $G_i(\bx)$, $i = 1, \cdots, 4$ satisfy \eqref{S-procedure1}, i.e.,
    \begin{equation*}
        G_1(\bx) \ge 0, \cdots, G_4(\bx) \ge 0 \Rightarrow F(\bx) \ge 0 ,
    \end{equation*}
    we have $\mathbb{D}\cap\mathbb{E}=\emptyset$, where $\mathbb{E} = \{(y_0,y_1,y_2,y_3,y_4) |~ y_0<0,~ y_i>0,~i=1,2,3,4\}$. Noting that $\mathbb{E} $ is also a convex cone, the standard separating hyperplane theorem \cite{boyd2004convex} implies that there exists $\bt=(t_0, t_1, t_2, t_3, t_4) \not= 0$ such that
    \begin{equation}\label{separetion_ineq1}
        \sum_{k = 0}^4t_ky_k \le 0 ~{\rm for~ all}~ (y_0, y_1, y_2, y_3, y_4)\in\mathbb{E}
    \end{equation}
    and
    \begin{equation}\label{separetion_ineq2}
        \sum_{k = 0}^4t_k\bx^HA_k\bx \ge 0 ~{\rm for~ all}~ \bx \in \mH^n.
    \end{equation}
    Then we must have $t_0 \ge 0$ and $t_k \le 0$ for $k = 1, 2, 3, 4$ by inequality \eqref{separetion_ineq1}. Moreover, we can confirm that $t_0 > 0$, otherwise since $\bt \neq 0 $ there exists some $k \in \{1,2,3,4\}$ such that $t_k < 0$ and thus for $\bx_0$ in \eqref{s-procedure-x0} it holds that $\sum_{k = 0}^4t_k\bx_0^HA_k\bx_0 = \sum_{k = 1}^4t_k\bx_0^HA_k\bx_0 < 0$, which  contradicts with \eqref{separetion_ineq2}. Finally, the validness of \eqref{S-procedure2} with $m=4$ is implied by \eqref{separetion_ineq2}  by
    dividing both sides of \eqref{separetion_ineq2} by $t_0$ and letting $\tau_k = \frac{-t_k}{t_0} \ge 0  $ for $k=1,2,3,4$.
\end{proof}

\section{Quaternion Quadratically Constrained Quadratic Optimization}\label{sec:qua_quadratic_opt}
In this section, we consider the following {quadratically constrained quadratic programming} in the quaternion domain:
\begin{equation*}\label{QCQP_stand}
    \begin{aligned}
        {\rm (QCQP)} \quad  \max \quad & \bx^HQ\bx + 2{\rm Re}(\bx^Hq)\\
                      {\rm s.t.} \quad & \bx^HA_j\bx + 2{\rm Re}(\bx^Hb_j) + c_j \le 0, \ j = 1,\cdots,m,
    \end{aligned}
\end{equation*}
where $Q, A_j \in \mcH^n$, $b_j, q \in \mH^n$, $c_j \in \mR$, $j = 1,\cdots,m$. Normally, solving the above problem is very challenging. However, when the number of constraints $m$ is small, the problem is possibly tractable. For instance, in the complex domain, Huang and Zhang \cite{huang2007complex} showed that the problem (QCQP) with $m=2$ could be cast as an SDP and thus could be solved in polynomial time. While regarding the quaternion domain, we shall show that a similar result holds for a larger value of $m$ (in particular for $m=4$) by the afore-mentioned rank-one decomposition in Theorem \ref{qua_decomposition}.

To relate this problem to SDP, we rewrite (QCQP) as the following matrix form:
\begin{equation}\label{QCQP2}
\begin{aligned}
 \quad  \max \quad & B_0 \bullet \begin{bmatrix}
1 & \bx^H \\
\bx & \bx\bx^H \\
\end{bmatrix}\\
{\rm s.t.} \quad & B_j \bullet \begin{bmatrix}
1 & \bx^H \\
\bx & \bx\bx^H \\
\end{bmatrix} \le 0, ~j = 1,\cdots,m,
\end{aligned}
\end{equation}
where $B_0 = \begin{bmatrix}
0 & q^H \\
q & Q \\
\end{bmatrix}, \
B_j = \begin{bmatrix}
c_j & b_j^H \\
b_j & A_j \\
\end{bmatrix}, ~j = 1,\cdots,m$. Problem \eqref{QCQP2} can be homogenized by introducing a new variable $t$ and requiring $|t|^2=1$:
\begin{equation*}\label{HQ}
\begin{aligned}
{\rm (HQCQP)} \quad  \max \quad & B_0 \bullet \begin{bmatrix}
|t|^2 & t \cdot \bx^H \\
\bx \cdot \bar{t}  & \bx\bx^H \\
\end{bmatrix}\\
{\rm s.t.} \quad & B_j \bullet \begin{bmatrix}
|t|^2 & t \cdot \bx^H \\
\bx \cdot \bar{t}  & \bx\bx^H \\
\end{bmatrix} \le 0, ~j = 1,\cdots,m, \\
& B_{m+1} \bullet \begin{bmatrix}
|t|^2 & t \cdot \bx^H \\
\bx \cdot \bar{t}  & \bx\bx^H \\
\end{bmatrix} = 1,
\end{aligned}
\end{equation*}
where $ B_{m+1} = \begin{bmatrix}
1 & 0 \\
0 & 0 \\
\end{bmatrix}$. Then for any solution $\begin{pmatrix} t \\ \bx \\ \end{pmatrix}$  of (HQCQP), since $|t|^2=1$, $\bx \cdot \bar {t}$ is a solution to \eqref{QCQP2} and hence a solution to (QCQP). Therefore, we shall work on (HQCQP) in the rest of this section. By letting
$$X = \begin{bmatrix}
|t|^2 & t \cdot \bx^H \\
\bx \cdot \bar{t}  & \bx\bx^H \\
\end{bmatrix}~\mbox{and dropping the constraint:}~\rank (X) =1,$$
we obtain an SDP relaxation of (HQCQP):
\begin{equation*}\label{QCQPR}
    \begin{aligned}
        {\rm (QCQPR)} \quad  \max \quad & B_0 \bullet X \\
                       {\rm s.t.} \quad & B_j \bullet X \le 0, ~j = 1,\cdots,m, \\
                                        & B_{m+1} \bullet X = 1, \\
                                        & X \succeq 0,
    \end{aligned}
\end{equation*}
whose dual is given by
\begin{equation*}\label{DQCQPR}
    \begin{aligned}
        {\rm (DQCQPR)} \quad  \min \quad & y_0 \\
                        {\rm s.t.} \quad & Y = \sum_{j=1}^m y_jB_j - B_0 + y_0B_{m+1} \succeq 0, \\
                                         & y_j \ge 0,~j = 1,\cdots,m,~y_0 \ {\rm free}.
    \end{aligned}
\end{equation*}
To proceed, we assume (QCQP) satisfies the Slater condition, that is, there exists $\bx_0 \in \mH^n$ such that \begin{equation}\label{slater}
q_j(\bx_0) := B_j \bullet \begin{bmatrix}
    1 & \bx_0^H \\
    \bx_0 & \bx_0\bx_0^H \\
\end{bmatrix} < 0, j = 1,\cdots,m.
\end{equation}
Accordingly, (QCQPR) satisfies the Slater condition as well. Now we are ready to present the main theorem of this section, which states that (QCQP) is essentially equivalent to an SDP when $m = 4$.

\begin{theorem}
    Suppose (QCQP) satisfies the Slater condition \eqref{slater} and $m = 4$. Then (QCQP) and (QCQPR) have the same optimal value. Moreover, an optimal solution to (QCQP) can be constructed from that of (QCQPR).
\end{theorem}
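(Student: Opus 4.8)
The plan is to prove the inequality $v_{\mathrm{QCQPR}}\le v_{\mathrm{QCQP}}$ between the optimal values; the reverse inequality is immediate, since any feasible $\bx$ of (QCQP) gives the feasible point $\begin{bmatrix}1 & \bx^H\\ \bx & \bx\bx^H\end{bmatrix}$ of (QCQPR) with the same objective. Equality then follows, and the construction below simultaneously turns an optimal solution $X^*$ of (QCQPR) into an optimal solution of (QCQP). If (QCQP) is unbounded above there is nothing to prove, so assume its value — hence that of (QCQPR) — is finite and that (QCQPR) attains it at some rank-$r$ matrix $X^*$ (this attainment is implicit in the statement).

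Next I would invoke conic duality. Since (QCQP) satisfies the Slater condition \eqref{slater}, (QCQPR) does too (a strictly feasible positive definite matrix is produced by a suitably rescaled small perturbation of $\begin{bmatrix}1 & \bx_0^H\\ \bx_0 & \bx_0\bx_0^H\end{bmatrix}$), so strong duality holds between (QCQPR) and (DQCQPR), the common value $v^*$ is attained on both sides, and there is an optimal dual solution $(y_0^*,\dots,y_4^*)$ with $y_0^*=v^*$ and
\[
Y^* := \sum_{j=1}^{4} y_j^* B_j - B_0 + y_0^* B_{m+1} \succeq 0, \qquad X^* \bullet Y^* = 0 .
\]
For any rank-one decomposition $X^*=\sum_i \bx_i\bx_i^H$ we get $0 = X^*\bullet Y^* = \sum_i \bx_i^H Y^*\bx_i$ with each summand nonnegative, hence $Y^*\bx_i = 0$ for all $i$; expanding $X^*\bullet Y^* = 0$ and using $y_0^*=v^*$ and $B_{m+1}\bullet X^*=1$ also yields the complementary slackness relations $y_j^*(B_j\bullet X^*)=0$ for $j=1,\dots,4$.

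The key step is to apply Theorem \ref{qua_decomposition} to the four constraint matrices $B_1,B_2,B_3,B_4$ (not to $B_0$, and not to the homogenizing matrix $B_{m+1}$), getting $X^*=\sum_{i=1}^r \bx_i\bx_i^H$ with $\bx_i^H B_j\bx_i = (B_j\bullet X^*)/r\le 0$ for $j=1,2,3,4$ and every $i$. For each $i$ the relation $\bx_i^H Y^*\bx_i = 0$ reads $\bx_i^H B_0\bx_i = \sum_{j=1}^4 y_j^*\,\frac{B_j\bullet X^*}{r} + v^*\,|(\bx_i)_0|^2$, which collapses, by complementary slackness, to $\bx_i^H B_0\bx_i = v^*\,|(\bx_i)_0|^2$, where $(\bx_i)_0$ denotes the leading (the ``$t$'') coordinate. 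Since $B_{m+1}\bullet X^* = \sum_i|(\bx_i)_0|^2 = 1$, some $\bx_{i_0}$ has $(\bx_{i_0})_0\ne 0$; put $\bw := \bx_{i_0}\,(\bx_{i_0})_0^{-1}$, so that $\bw$ has leading coordinate $1$. Because $\bx_{i_0}^H B\bx_{i_0}\in\mR$ for Hermitian $B$, one has $\bw^H B\bw = |(\bx_{i_0})_0|^{-2}\,\bx_{i_0}^H B\bx_{i_0}$, whence $\bw^H B_j\bw = |(\bx_{i_0})_0|^{-2}(B_j\bullet X^*)/r\le 0$ for $j=1,\dots,4$, $\bw^H B_{m+1}\bw = 1$, and $\bw^H B_0\bw = v^*$. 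Writing $\bw = \begin{pmatrix}1\\\bz\end{pmatrix}$, this makes $\bw$ feasible for (HQCQP) with objective $v^*$, so $\bz$ is feasible for (QCQP) with objective $B_0\bullet\begin{bmatrix}1 & \bz^H\\ \bz & \bz\bz^H\end{bmatrix} = v^*$. Hence $v_{\mathrm{QCQP}}\ge v^* = v_{\mathrm{QCQPR}}$, which forces equality, and $\bz$ is the desired optimal solution of (QCQP).

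The algebra is routine; the care is all in the duality bookkeeping — checking that (QCQPR) inherits Slater, that a complementary primal–dual optimal pair exists so the argument can be anchored at an attained optimum, and that over the quaternions $X^*\succeq 0$, $Y^*\succeq 0$, $X^*\bullet Y^*=0$ still force $Y^*\bx_i = 0$ for the terms of the decomposition. The conceptual point is short: the quaternion rank-one decomposition can absorb exactly the $m=4$ constraint matrices, the objective value is returned for free by the kernel relation $Y^*\bx_i=0$, and the normalization comes from $B_{m+1}\bullet X^* = 1$ guaranteeing a rank-one term with nonzero leading coordinate.
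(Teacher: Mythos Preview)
Your proof is correct and follows essentially the same route as the paper: apply Theorem~\ref{qua_decomposition} to the four constraint matrices $B_1,\dots,B_4$, pick a rank-one term with nonzero leading coordinate (guaranteed by $B_{m+1}\bullet X^*=1$), scale it to leading coordinate $1$, and use strong duality and complementary slackness to certify optimality. The only cosmetic difference is that you read off the objective value directly from $\bx_i^H Y^*\bx_i=0$, whereas the paper verifies that the resulting rank-one matrix is complementary to the dual optimum; these are equivalent computations.
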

\begin{proof}
    Denote `$\trianglelefteq$' to be either `$<$' or `='. For the primal optimal solution $X^*$,  the first group of constraints in (QCQPR) can be rewritten as
    $$B_j \bullet X^* \trianglelefteq_j 0, ~j = 1,2,3,4.$$
    Letting $r=\rank X^*$,
     by Theorem \ref{qua_decomposition}, there exist some non-zero quaternion vectors $\tilde \bx_k = \begin{pmatrix}
     t_k \\
     {\bx}_k \\
     \end{pmatrix} \in \mH^{n+1}, ~k = 1,\cdots,r$, such that
    \begin{equation}\label{QCQP-Decom}
        X^* = \sum_{k=1}^r \tilde\bx_k\tilde\bx_k^H \ {\rm ~and~} \ B_j \bullet \tilde\bx_k\tilde\bx_k^H \trianglelefteq_j 0,~\mbox{for}~ \ j = 1,2,3,4, \ k = 1,\cdots,r.
    \end{equation}
    Since $B_{m+1} \bullet X^* = 1$, we have
   $\sum_{k=1}^{r}|t_k|^2 = X^*_{11} = 1$ and there exits $\ell \in \left\{ 1, \cdots, r \right\}$ such that $t_\ell \not= 0$.
    Therefore, according to \eqref{QCQP-Decom}, it holds that
     \begin{eqnarray*}q_j\left({\bx}_\ell \cdot \frac{1}{t_\ell}\right)&=& B_j \bullet \begin{bmatrix}
     1 & \left({\bx}_\ell \cdot \frac{1}{t_\ell}\right)^H \\
     \left({\bx}_\ell \cdot \frac{1}{t_\ell}\right) & \left({\bx}_\ell \cdot \frac{1}{t_\ell}\right)\left({\bx}_\ell \cdot \frac{1}{t_\ell}\right)^H \\
     \end{bmatrix} \\
  &=& \frac{1}{|t_\ell|^2} \cdot B_j \bullet \begin{bmatrix}
  	|t_\ell|^2 & {t_\ell} \cdot {\bx}_\ell ^H \\
  	{\bx}_\ell \cdot \bar{t_\ell} & {\bx}_\ell {\bx}_{\ell}^H \\
  \end{bmatrix}\\
&=&\frac{1}{|t_\ell|^2} B_j \bullet \tilde\bx_{\ell}\tilde\bx_{\ell}^H \trianglelefteq_j 0, j = 1,2,3,4,
     \end{eqnarray*}
     which further implies $\bx_{\ell} \cdot \frac{1}{t_\ell}$ is a feasible solution to (QCQP).

        Suppose  $(y_0^*, y_1^*, y_2^*, y_3^*, y_4^*, Y^*)$ is a dual optimal solution. Since the Slater condition holds, the strong duality is valid between (QCQPR) and (DQCQPR). Consequently, the complementary slackness condition holds, i.e.,
        $$\sum_{i=1}^{r} Y^* \bullet \tilde\bx_i\tilde\bx_i^H = Y^* \bullet X^*  = 0,~\mbox{and}~y_j^*\cdot(B_j \bullet X^*) = 0 ~{\rm for}~ j = 1,2,3,4.
        $$
        Moreover, since $X^*\succeq 0$ and $Y^*\succeq 0$, we have $Y^* \bullet \tilde \bx_{\ell}\tilde \bx_{\ell}^H = 0$. We also observe from \eqref{QCQP-Decom} that
        $$
        B_j \bullet \tilde\bx_{\ell}\tilde\bx_{\ell}^H \trianglelefteq_j 0 \Leftrightarrow B_j \bullet X^* \trianglelefteq_j 0 ,~\mbox{for}~ \ j = 1,2,3,4,
        $$
        which indicates that
        $$y_j^*\cdot\left(B_j \bullet \tilde\bx_{\ell}\tilde\bx_{\ell}^H\right) = y_j^*\cdot(B_j \bullet X^*) = 0 ~{\rm for}~ j = 1,2,3,4.$$
        Combining the above results with  $\begin{pmatrix}
        1 \\
        {\bx}_{\ell} \cdot \frac{1}{t_{\ell}}\\
        \end{pmatrix}
        \begin{pmatrix}
        1 \\
        {\bx}_{\ell} \cdot \frac{1}{t_{\ell}}\\
        \end{pmatrix}^H = \frac{1}{|t_{\ell}|^2}\cdot \tilde\bx_{\ell}\tilde\bx_{\ell}^H$ yields that $\begin{pmatrix}
        1 \\
        {\bx}_{\ell} \cdot \frac{1}{t_{\ell}}\\
        \end{pmatrix}
        \begin{pmatrix}
        1 \\
        {\bx}_{\ell} \cdot \frac{1}{t_{\ell}}\\
        \end{pmatrix}^H$ is complementary to $(y_0^*, y_1^*, y_2^*, y_3^*, y_4^*, Y^*)$, and it is an optimal solution to (QCQPR). Therefore ${\bx}_{\ell} \cdot \frac{1}{t_{\ell}}$ is optimal for (QCQP) and the conclusion follows.
\end{proof}

%
%



%
%

\bibliographystyle{spmpsci}      

\end{document}